\documentclass[12pt]{amsart}

\usepackage{amsaddr}
\usepackage{color}
\usepackage{comment}
\usepackage{lineno}
\usepackage{ulem}

\def\B{\mathbb{B}}
\def\R{\mathbb{R}}
\def\RN{\mathbb{R}^N}

\def\calL{\mathcal{L}}

\def\e{\varepsilon}
\def\ul{{ ul}}
\def\loc{{loc}}

\def\bu{\bar{u}}

\def\bw{\bar{w}}

\def\XXint#1#2#3{{\setbox0=\hbox{$#1{#2#3}{\int}$}
\vcenter{\hbox{$#2#3$}}\kern-.5\wd0}}

\newcommand{\di}{\mathop{}\!\mathrm{d}}




\DeclareMathOperator{\esssup}{ess\,sup}

\theoremstyle{plain}
	\newtheorem{theorem}{Theorem}[section]
	\newtheorem{lemma}[theorem]{Lemma}
	
	\newtheorem{definition}[theorem]{Definition}
	\newtheorem{proposition}[theorem]{Proposition}
	\newtheorem{remark}[theorem]{Remark}
	
	\newtheorem{problem}[theorem]{Problem}
	
\theoremstyle{plain}

\setlength{\topmargin}{0cm}
\setlength{\oddsidemargin}{0.0cm}
\setlength{\evensidemargin}{0.0cm}
\setlength{\textheight}{22cm}
\setlength{\textwidth}{16cm}

\makeatletter
 \@addtoreset{equation}{section}
\makeatother

\begin{document}
\title[Nonuniqueness]{
Non-uniqueness of positive solutions for\\
supercritical semilinear heat equations\\ without scale invariance
}

\author{Kotaro Hisa}
\address[Kotaro Hisa]{
Department of Applied Mathematics, Faculty of Sciences, Fukuoka University,\\ 8-19-1, Nanakuma, Jonan-ku, Fukuoka 814-0180, Japan,\\
{\rm{\texttt{hisak@fukuoka-u.ac.jp}}}
}

\author{Yasuhito Miyamoto}
\thanks{ORCiD of YM is 0000-0002-7766-1849}
\thanks{YM was supported by JSPS KAKENHI Grant Number 24K00530.}
\address[Yasuhito Miyamoto]{Graduate School of Mathematical Sciences, The University of Tokyo,\\
 3-8-1 Komaba, Meguro-ku, Tokyo 153-8914, Japan,\\
{\rm{\texttt{miyamoto@ms.u-tokyo.ac.jp}}}
}

\begin{abstract}
We establish nonuniqueness of solutions for Cauchy problems of semilinear heat equations with a wide class of nonlinearities.
Specifically, we consider
\[
\begin{cases}
\partial_tu-\Delta u=f(u), & x\in\RN,\ t>0,\\
u(x,0)=u_0(x), & x\in\RN,
\end{cases}
\]
where $N>2$.
We assume that the growth rate of $f$ is less than the Joseph-Lundgren exponent for $N>10$ and it satisfies certain assumptions guaranteeing {the existence of} a positive radial singular stationary solution $u^*$.
We prove that if $u_0=u^*$, then the problem has at least two positive solutions, namely $u^*$ and $u(t)$ which satisfies $u(t)\in L_{loc}^{\infty}(0,t_0;L^{\infty}(\RN))$ for some $t_0>0$ and
$$
u(t)\to u^*\quad\text{in}\ L^{\gamma}_{ul}(\RN)\quad\text{as}\ t\to 0^+
$$
for $1\le \gamma<N(p_f-1)/2$, where $p_f:=\lim_{u\to\infty}uf'(u)/f(u)$ is a growth rate of $f$.
Hence, nonuniqueness problem can be reduced to the existence problem of a positive radial singular stationary solution.
The method of construction of $u(t)$ is based on the monotonicity argument.
Transformations of forward self-similar solutions for $f(u)=u^p$ and $e^u$ play a crucial role.
\end{abstract}

\date{\today}
\subjclass[2020]{Primary: 35K15, 35A02, secondary 35A21, 35B44.}
\keywords{Self-similar solutions; Singular stationary solution; Joseph-Lundgren exponent; Quasi-scaling}

\maketitle

\section{Introduction and main results}
We are concerned with Cauchy problem for the semilinear heat equation
\begin{equation}\label{S1E1}
	\left\{
	\begin{aligned}
		&\partial_t u  -\Delta  u =  f(u),	\quad && x\in \RN,\,\, t>0,\\
		&u(x,0) =  u_0(x),		\quad && x\in \RN,
	\end{aligned}
	\right.
\end{equation}
where  $N>2$ and $u_0$ is a nonnegative measurable function on $\mathbb{R}^N$.
The aim of this paper is to establish nonuniqueness of positive solutions of problem~\eqref{S1E1} for a wide class of nonlinearities if problem~\eqref{S1E1} has a positive radial singular stationary solution.

Let us explain notations.
Let $p_S$ denote the critical Sobolev exponent, {\it i.e.},
\[
p_S :=
\begin{cases}
\displaystyle{\frac{N+2}{N-2}} & \text{if}\ N>2,\\
\infty & \text{if}\ N=1,2.
\end{cases}
\]	
Throughout the present paper, we make the following assumptions on $f$:
\begin{itemize}
\item[(A1)]  $f\in C^1[0,\infty)\cap C^2(0,\infty)$, $f(0)=0$, and $f'(0)=0$;
\item[(A2)]  $f'(u)>0$ for $u>0$ and $f''(u)>0$ for $u>0$;
\item[(A3)] There exists a limit
$$
q_f:=\lim_{u\to\infty}\frac{f'(u)^2}{f(u)f''(u)};
$$
\item[(A4)] The following holds:
\begin{equation*}
Q(u):=uf(u) - (p_S+1) \int_{0}^u f(s) \, \di s \ge 0\quad\mbox{for}\ u\ge 0.
\end{equation*}
\end{itemize}
We define $f(u)=0$ for $u<0$.
In \cite{FI18} $A$ is defined by $A=\lim_{u\to\infty}f'(u)F(u)$, where
\begin{equation}\label{F}
F(u)=\int_u^{\infty}\frac{ds}{f(s)} \quad\mbox{for}\quad u>0.
\end{equation}
Then, when (A3) holds, by L'Hospital rule we have
$$
A=\lim_{u\to\infty}\frac{F(u)}{1/f'(u)}=\lim_{u\to\infty}\frac{f'(u)^2}{f(u)f''(u)}=q_f.
$$
It is proved in \cite[Remark 1.1]{FI18} that if the limit $A$ exists, then $A\ge 1$.
Hence, $q_f\ge 1$ also holds if (A3) holds.
The growth rate of $f$ can be defined by
$$p_f:=\lim_{u\to\infty}\frac{uf'(u)}{f(u)}.
$$
Then, by L'Hospital rule we have
$$
\frac{1}{p_f}=\lim_{u\to\infty}\frac{f(u)/f'(u)}{u}=\lim_{u\to\infty}\left(1-\frac{f(u)f''(u)}{f'(u)^2}\right)=1-\frac{1}{q_f},
$$
and hence $1/p_f+1/q_f=1$.
The exponent $q_f$ is the conjugate exponent of the growth rate $p_f$.
Using the $q_f$-exponent, we can treat power nonlinearities ($1<q_f<\infty$) and super-power nonlinearities ($q_f=1$), including exponential nonlinearities, in a unified way.
We later use the conjugate exponent of $p_S$, {\it i.e.,}
$$
q_S:=\frac{p_S}{p_S-1}=\frac{N+2}{4}.
$$
The exponent $q_{S}$ can be formally defined for $N\ge 1$.
However, $q_{S}$ becomes effective only in the case $N>2$, since $q_{S}>1$ for $N>2$ and $q_f\ge 1$ always holds.

Typical examples are $f(u)=u^{\beta}$, $\beta\ge p_S$, and $f(u)=u^{\beta}e^{u^{\gamma}}$, $\beta\ge p_S$ and $\gamma>1$.
Other examples are given in Section~2.

We need the so-called Joseph-Lundgren exponent
$$
p_{JL}:=
\begin{cases}
\displaystyle{1+\frac{4}{N-4-2\sqrt{N-1}}} & \text{if}\ N>10,\\
\infty & \text{if}\ 2\le N\le 10.
\end{cases}
$$
The conjugate exponent of $p_{JL}$ is given by
$$
q_{JL}:=\frac{p_{JL}}{p_{JL}-1}=\frac{N-2\sqrt{N-1}}{4}.
$$
The exponent $q_{JL}$ can be formally defined for $N\ge 1$.
However, $q_{JL}$ becomes effective only in the case $N>10$, since $q_{JL}>1$ for $N>10$ and $q_f\ge 1$ always holds.

For $x\in\mathbb{R}^N$ and $\rho>0$, set $B(x,\rho):=\left\{ y\in\RN;\ |x-y|<\rho\right\}$.
We define a uniformly local $L^p$ space as follows:
For $1\le p\le\infty$, 
\[
L^p_{\ul}(\RN):=\left\{ u\in L^p_{\loc}(\RN);\ \left\|u\right\|_{L^p_{ul}(\RN)}<\infty\right\},
\]
where
$$
\left\|u\right\|_{L^p_{\ul}(\RN)}:=
\begin{cases}
\displaystyle{\esssup\displaylimits_{z\in\RN}\left\|u\right\|_{L^{\infty}(B(z,1))}} & \textrm{if}\ p=\infty,\\
\displaystyle{\sup_{z\in\RN}\left(\int_{B(z,1)}|u(x)|^p \, \di x\right)^\frac{1}{p}} & \textrm{if}\ 1\le p<\infty.
\end{cases}
$$
It is obvious that $L^{\infty}_{\ul}(\RN)=L^{\infty}(\RN)$ and that $L^{p_1}_{\ul}(\RN)\subset L^{p_2}_{\ul}(\RN)$ if $1\le p_2\le p_1 <\infty$.
Let $\calL^p_{\ul}(\RN)$ denote the closure of the space of bounded uniformly continuous functions $BUC(\RN)$ in the space $L^p_{\ul}(\RN)$, {\it i.e.},
\begin{equation}\label{BUC}
\calL^p_{\ul}(\RN):=\overline{BUC(\RN)}^{\left\|\,\cdot\,\right\|_{L^p_{\ul}(\RN)}}.
\end{equation}

Let $G_t= G_t(x)$ be the fundamental solution of 
\[
\partial_t v -\Delta v = 0 \quad \mbox{in} \quad \mathbb{R}^N\times (0,\infty),
\]
that is, 
\[
G_t(x) := \frac{1}{(4\pi t)^\frac{N}{2}} \exp \left(-\frac{|x|^2}{4t}\right)
\]
for $x\in \mathbb{R}^N$ and $t>0$.
For $u_0\in L^1_{\ul} (\mathbb{R}^N)$, let $S(t)u_0$ be
\[
[S(t)u_0](x) := \int_{\mathbb{R}^N} G_t(x-y) u_0(y) \, \di y
\]
for $x\in \mathbb{R}^N$ and $t>0$.

We are in a position to define a solution of problem~\eqref{S1E1}.
\begin{definition}\label{S1D1}
Let $u$ be a nonnegative measurable function on $\RN\times[0,T)$, where $T\in(0,\infty]$.
\begin{itemize}
\item[(i)] We say that $u$ is a solution of problem~\eqref{S1E1} in $\mathbb{R}^N\times [0,T)$ if
\begin{equation}\label{S1D1E2}
\begin{split}
\infty>u(x,t)
&=\int_{\mathbb{R}^N}G_t(x-y )u_0(y) \, \di y+\int_0^t\int_{\mathbb{R}^N}G_{t-s}(x-y)f(u(y,s)) \, \di y\di s
\end{split}
\end{equation}
for a.a.~$(x,t)\in\R^N\times [0,T)$.
\item[(ii)] If  $u$ satisfies \eqref{S1D1E2} with $=$ replaced by $\ge$, then we say that $u$ is a supersolution in $\mathbb{R}^N\times [0,T)$.
\end{itemize}
\end{definition}

Under the assumptions {\rm (A1)}--{\rm (A4)} with $q_f<q_S$, the following positive radial singular stationary solution exists:
\begin{proposition}\label{S1P1}
Let $N>2$.
Suppose {\rm (A1)}--{\rm (A4)}.
Suppose, in addition, that $q_f<q_S$.
The problem
\begin{equation}\label{SS}
\begin{cases}
-\Delta u=f(u), & x\in\R^N\setminus\{0\},\\
u(x)\ge 0, & x\in\R^N\setminus\{0\},\\
u(x)\ \mbox{is radial},\\
u(x)\to\infty\quad\mbox{as}\quad |x|\to 0^+,
\end{cases}
\end{equation}
has a unique solution $u^*\in C^2(\R^N\setminus\{0\})$, which is called a singular solution.
Moreover, $u^*\in \calL^\gamma_{\ul}(\R^N)$ for $1\le \gamma<\gamma^*$,
$u^{*\prime}(r)<0$ for all $r>0$ and $u^*$ satisfies
\[
u^*(x)=F^{-1}\left[\frac{|x|^2}{2N-4q_f}(1+o(1))\right]\quad\mbox{as}\quad |x|\to 0^+,
\]
where 
\[
1 \le \gamma^* := 
\begin{cases}
\displaystyle{\frac{N}{2}\frac{1}{q_f-1}} & \mbox{if} \quad q_f>1,\\
\infty &  \mbox{if} \quad q_f=1,\\
\end{cases}
\]
and
$F^{-1}$ is the inverse function of $F$ defined by \eqref{F}.
\end{proposition}
We will prove the following lemma in Section~3.
\begin{lemma}\label{S10L1}
Let $u^*$ be as in Proposition~{\rm \ref{S1P1}}.
Then, $u^*$ is a positive singular stationary solution of problem~\eqref{S1E1} in the sense of Definition~{\rm \ref{S1D1}}.
In particular, 
\[
\infty>u^*(x)=[S(t)u^*](x)+\int_0^t[S(t-s)f(u^*)](x)\, \di s
\]
for all $x\in\RN\setminus\{0\}$ and $t\in(0,\infty)$.
\end{lemma}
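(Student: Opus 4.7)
The plan is to derive the Duhamel identity directly from the classical stationary PDE $-\Delta u^* = f(u^*)$ on $\R^N \setminus \{0\}$, by combining the parabolic identity $\partial_s G_{t-s}(x-y) = -\Delta_y G_{t-s}(x-y)$ with integration by parts after excising a small ball around the origin, and then letting the ball shrink to a point.

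First I would verify finiteness. Fix $x \in \R^N\setminus\{0\}$ and $t > 0$. Proposition~\ref{S1P1} gives $u^* \in \calL^\gamma_{\ul}(\R^N)$ for some $\gamma\ge 1$, which in turn yields $[S(t)u^*](x) < \infty$. Combining the radial ODE $(r^{N-1}u^{*\prime})' = -r^{N-1} f(u^*)$ with the asymptotic $u^*(y) = F^{-1}\!\left(|y|^2/(2N - 4q_f)(1+o(1))\right)$ as $|y| \to 0^+$ produces $f(u^*(y)) = O(|y|^{-2})$ near the origin, so $f(u^*) \in L^1_{\loc}(\R^N)$ and the iterated integral $\int_0^t\!\int_{\R^N} G_{t-s}(x-y) f(u^*(y))\,\di y\,\di s$ is also finite.

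Next, for $0 < \epsilon < |x|/2$, I would introduce
\[
I_\epsilon(s) := \int_{\R^N \setminus B(0,\epsilon)} G_{t-s}(x-y)\, u^*(y)\,\di y, \qquad s \in [0,t),
\]
differentiate in $s$, use $\partial_s G_{t-s}(x-y) = -\Delta_y G_{t-s}(x-y)$, and apply Green's identity on $\R^N \setminus B(0,\epsilon)$ (where $u^* \in C^2$) together with $\Delta u^* = -f(u^*)$ to obtain
\[
I_\epsilon'(s) = \int_{\R^N \setminus B(0,\epsilon)} G_{t-s}(x-y)\, f(u^*(y))\,\di y + R_\epsilon(s),
\]
where
\[
R_\epsilon(s) = \int_{\partial B(0,\epsilon)}\! \big[u^*(y)\, \partial_{\nu_y} G_{t-s}(x-y) - G_{t-s}(x-y)\, \partial_{\nu_y} u^*(y)\big]\,\di S(y).
\]
Integrating in $s$ over $(0,t)$ and noting that $I_\epsilon(s) \to u^*(x)$ as $s \to t^-$ (by continuity of $u^*$ at $x$), I arrive at
\[
u^*(x) = I_\epsilon(0) + \int_0^t\!\int_{\R^N \setminus B(0,\epsilon)} G_{t-s}(x-y) f(u^*(y))\,\di y\,\di s + \int_0^t R_\epsilon(s)\,\di s.
\]
Because $u^*, f(u^*) \ge 0$, monotone convergence drives the first two terms to $[S(t)u^*](x)$ and $\int_0^t [S(t-s)f(u^*)](x)\,\di s$ as $\epsilon \to 0^+$.

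The hard part is showing $\int_0^t R_\epsilon(s)\,\di s \to 0$. Since $|x-y| \ge |x|/2$ on $\partial B(0,\epsilon)$, both $G_{t-s}(x-y)$ and $|\nabla_y G_{t-s}(x-y)|$ are bounded uniformly in $s \in [0,t]$ and $y \in \partial B(0,\epsilon)$ by constants depending only on $x$ and $t$. Using $F'(u) = -1/f(u)$ to differentiate the singular asymptotic gives $|u^{*\prime}(r)| \sim c\, r\, f(u^*(r))$ as $r \to 0^+$, so
\[
|R_\epsilon(s)| \le C_{x,t}\,\epsilon^{N-1}\big(u^*(\epsilon) + \epsilon\, f(u^*(\epsilon))\big).
\]
The assumption $q_f < q_S$ (equivalently $p_f > p_S > N/(N-2)$) makes the singularity of $u^*$ strictly weaker than $|y|^{-(N-2)}$, hence $\epsilon^{N-1} u^*(\epsilon) \to 0$ and $\epsilon^N f(u^*(\epsilon)) \to 0$, and integration in $s$ only adds a factor $t$. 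Verifying these two decay rates explicitly from $F^{-1}$ and the growth regime governed by (A3) is the technical core; once confirmed, passing to the limit yields the claimed Duhamel identity for every $x \in \R^N\setminus\{0\}$ and $t \in (0,\infty)$, and therefore $u^*$ is a solution of \eqref{S1E1} in the sense of Definition~\ref{S1D1}.
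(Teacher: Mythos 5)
Your proposal follows essentially the same route as the paper's proof: Green's identity on $\R^N\setminus B(0,\e)$, the duality $\partial_s G_{t-s}(x-y)=-\Delta_y G_{t-s}(x-y)$, vanishing of the boundary terms on $\partial B(0,\e)$ via the decay estimates of Lemma~\ref{S3L3}, and monotone convergence for the nonlinear term; the only organizational difference is that the paper integrates in time over $(0,t-\delta)$ and removes $\delta$ afterwards using $u^*\in\calL^{\gamma}_{\ul}(\RN)$ together with Proposition~\ref{S4P2}, whereas you send $s\to t^-$ directly, which is legitimate since $u^*$ is bounded and continuous on $B(0,\e)^c$. One quantitative slip: near the origin $f(u^*(y))=O(|y|^{-2q_f-2\delta})$ (cf.~\eqref{S3L3E2}), not $O(|y|^{-2})$ unless $q_f=1$; this does not damage the argument, because $q_f<q_S$ and $N>2$ give $2q_f<N$, so $f(u^*)\in L^1_{\loc}(\RN)$ and both $\e^{N-1}u^*(\e)\to 0$ and $\e^{N}f(u^*(\e))\to 0$ still hold.
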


The main result of the present paper is the following:
\begin{theorem}[Nonuniqueness (supercritical)]\label{S1T1}
Let $N>2$.
Suppose {\rm (A1)}--{\rm (A4)}.
Let $u^*$ be the singular solution of problem~\eqref{SS} given in Proposition~{\rm \ref{S1P1}}.
Suppose, in addition, the following {\rm (A5)} and {\rm (A6)}:
\begin{itemize}
\item[{\rm (A5)}] $q_{JL}<q_f<q_S\quad$  (\lq\lq almost" equivalently, $p_S<p_f<p_{JL}$);
\end{itemize}
\begin{itemize}
\item[{\rm (A6)}] If $q_f=1$, then $\displaystyle\frac{f'(u)^2}{f(u)f''(u)}\le 1$ for large $u>0$.
\end{itemize}
Then, problem~\eqref{S1E1} with $u_0=u^*$ has at least two positive solutions in the sense of Definition~{\rm \ref{S1D1}}, and hence uniqueness does not hold.
Specifically, one solution is $u(x,t)=u^*$ a.e.~in $\RN\times [0,\infty)$.
The other solution satisfies $u(x,t)\in L^{\infty}_{loc}((0,t_0),L^{\infty}(\RN))$ for some $t_0>0$, $u(x,t)$ satisfies problem~\eqref{S1E1} in the classical sense for $(x,t)\in\RN\times (0,t_0)$ and
\begin{equation}\label{S1T1S1}
\lim_{t\to 0^+}\left\|u(t)-u^*\right\|_{L^{\gamma}_{ul}(\RN)}=0,
\end{equation}
provided that  $1\le \gamma < \gamma^*$, where
$\gamma^*$ is defined in Proposition~{\rm \ref{S1P1}}.
\end{theorem}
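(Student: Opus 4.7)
The plan is to construct the second solution $u$ as a decreasing monotone limit squeezed between $u^{*}$ (already a solution of \eqref{S1E1} by Lemma~\ref{S10L1}) and a supersolution $\bU$ that is bounded for $t>0$ yet reattaches to $u^{*}$ as $t\to 0^{+}$. Once such a $\bU$ is available, I set $u_{0}:=\bU$ and iterate
\[
u_{n+1}(x,t):=[S(t)u^{*}](x)+\int_{0}^{t}[S(t-s)f(u_{n}(s))](x)\,\di s.
\]
The supersolution inequality gives $u_{1}\le u_{0}$; monotonicity of $f$ from (A2) propagates $u_{n+1}\le u_{n}$; Lemma~\ref{S10L1} combined with $u_{n}\ge u^{*}$ gives $u_{n+1}\ge u^{*}$ by induction. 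Monotone convergence then yields $u:=\lim_{n}u_{n}$ solving \eqref{S1D1E2}, with $u^{*}\le u\le\bU$. Since $\bU(\cdot,t)$ is bounded for $t>0$ whereas $u^{*}$ is unbounded at $x=0$, this forces $u\not\equiv u^{*}$.

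The crux is the construction of $\bU$, and the model problems $f_{0}(u)=u^{p_{f}}$ and $f_{0}(u)=e^{u}$ supply the blueprint. Under (A5), $p_{S}<p_{f}<p_{JL}$, so for the pure power the equation $v_{t}-\Delta v=v^{p_{f}}$ admits a forward self-similar solution $v_{p}(x,t)=t^{-1/(p_{f}-1)}\phi(x/\sqrt{t})$ with singular initial trace proportional to $|x|^{-2/(p_{f}-1)}$; in the borderline case $q_{f}=1$ one uses instead the analogous forward self-similar solution of $v_{t}-\Delta v=e^{v}$. I then define $\bU$ via the quasi-scaling
\[
\bU(x,t):=F^{-1}\bigl(\Phi(v_{p}(x,t),t)\bigr),
\]
where $\Phi$ is the explicit change of variable that transports the model profile onto the $F^{-1}$-profile of Proposition~\ref{S1P1}. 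After substitution, the inequality $\partial_{t}\bU-\Delta\bU\ge f(\bU)$ reduces to a pointwise comparison of $f'(u)^{2}/[f(u)f''(u)]$ with its limit $1/q_{f}$ from (A3); (A2) and (A5) supply the correct sign, while (A6) handles the borderline $q_{f}=1$. The initial trace in $L^{\gamma}_{ul}(\RN)$ is read off from the matching of the leading singular term of $v_{p}$ with $L_{p_{f}}|x|^{-2/(p_{f}-1)}$, transported through $F^{-1}\circ\Phi$ to the asymptotics $F^{-1}(|x|^{2}/(2N-4q_{f})(1+o(1)))$ of Proposition~\ref{S1P1}, giving convergence to $u^{*}$ in $L^{\gamma}_{ul}(\RN)$ for every $1\le\gamma<\gamma^{*}$.

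Granting $\bU$, the remaining steps are routine: monotone convergence yields $u$ in the sense of Definition~\ref{S1D1}; the sandwich $u^{*}\le u\le\bU$ transfers both the $L^{\infty}_{loc}((0,t_{0});L^{\infty}(\RN))$ regularity and the $L^{\gamma}_{ul}$ trace \eqref{S1T1S1} to $u$; and standard interior parabolic regularity upgrades $u$ to a classical solution on $\RN\times(0,t_{0})$. The main obstacle is the construction of $\bU$: choosing $\Phi$ so that $\bU$ is a supersolution in spite of the lack of scale invariance of $f$, lies above $u^{*}$ while remaining bounded for $t>0$, and has the correct $L^{\gamma}_{ul}$-trace over the entire range $\gamma<\gamma^{*}$. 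The Joseph--Lundgren bound $p_{f}<p_{JL}$ in (A5) is precisely what guarantees existence of the model self-similar profile $\phi$ with the spectral properties needed for the supersolution inequality, and (A6) is exactly the missing ingredient at the borderline $q_{f}=1$.
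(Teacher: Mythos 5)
Your overall architecture (transform a model self-similar solution via $F^{-1}\circ F_q$, use it as a barrier, and extract the second solution by a monotone scheme) is in the same spirit as the paper's, but your key inequality points the wrong way, and this is fatal. You sandwich the new solution as $u^{*}\le u\le\bU$ with $\bU(\cdot,t)$ bounded for $t>0$: these two requirements are incompatible, since any function lying above the singular stationary solution $u^{*}$ is unbounded near $x=0$ at every time. Consequently your downward iteration, even if it converged, could not produce a bounded solution distinct from $u^{*}$ (for the pure power with $p\ge p_{JL}$ the minimal continuation from above is known to be $u^{*}$ itself). The paper's second solution drops \emph{below} $u^{*}$ instantaneously: the supersolution $v$ of \eqref{S4E7} satisfies $v\le u^{*}$, being equal to the transformed self-similar function $\bu$ on $\{|x|<r(t)\}$, where $\bu<u^{*}$ by Lemma~\ref{S4L1}, and to $u^{*}$ outside; the bounded solution is then obtained as an \emph{increasing} limit of classical solutions with truncated data $u_{0n}=\min\{u^{*},n\}$, all dominated by $v$ via the comparison principle.

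Two further points where your construction would fail even with the sign corrected. First, the transformed function $\bu=F^{-1}[F_q[U]]$ satisfies $\partial_t\bu-\Delta\bu\ge f(\bu)$ only where $\bu$ is large, since the coefficient $q-f'(\bu)F(\bu)$ in \eqref{S4E5+} is nonnegative only for large $\bu$ (Lemma~\ref{S3L2} and (A6)); a single global formula $\bU=F^{-1}(\Phi(v_p,t))$ is therefore not a supersolution on all of $\RN\times(0,t_0)$. The paper circumvents this by gluing $\bu$ to $u^{*}$ at their first intersection radius $r(t)$, using the transversality $\bu_r(r(t),t)>u^{*\prime}(r(t))$ to obtain the supersolution inequality across the interface in the integral sense (Lemma~\ref{S4L2}); the differential inequality for $\bu$ is then needed only in $\{|x|<r(t)\}$, where $\bu\ge u^{*}(r(t))\to\infty$ as $t\to0^{+}$. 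Second, the initial trace: a pure quasi-self-similar expression has initial trace governed by the decay of the profile $\varphi$ at infinity, which matches $u^{*}$ only to leading order (and the profile $\varphi(\cdot,\alpha_0)$ chosen to intersect $\varphi^{*}$ generally has a different constant in its tail); the glued $v$ coincides with $u^{*}$ outside $B(0,r(t))$ with $r(t)\to0$, and it is this exact agreement, not an asymptotic matching, that yields \eqref{S1T1S1}.
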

Note that if $q_f=1$, then (A5) is equivalent to $2<N<10$.\\

In the pure power case $f(u)=u^{\beta}$ the conclusion of Theorem~\ref{S1T1} holds for $p_S<\beta<p_{JL}$.
The model case in the present paper is $f(u)=u^{\beta}e^{u^{\gamma}}$, $\beta\ge p_S$ and $\gamma>1$.
In this case we see in Section~2 that the conclusion of Theorem~\ref{S1T1} holds for $2<N<10$.

The second main theorem is nonuniqueness for critical and subcritical cases.
\begin{theorem}[Nonuniqueness (critical/subcritical)]\label{S1T2}
Let $N>2$, and let
$$
p_0:=\frac{N}{N-2}\quad\text{and}\quad
q_0:=\frac{p_0}{p_0-1}=\frac{N}{2}.
$$
Suppose {\rm (A1)}--{\rm (A3)} and the following {\rm (A7)} and {\rm (A8)}:
\begin{itemize}
\item[{\rm (A7)}] $q_S\le q_f<q_0$\quad (equivalently, $p_0<p_f\le p_S$),
\item[{\rm (A8)}] Problem~\eqref{S1E1} has a positive radial singular stationary solution $u^*$ such that
\begin{multline}\label{A8E1}
u^*(x)=F^{-1}\left[\frac{|x|^2}{2N-4q_f}(1+\theta(|x|))\right]\quad\text{as}\ |x|\to 0^+,\\
\text{where $\theta(r)$ satisfies $\theta\in C^1(0,r_1)$ for some $r_1>0$, $\lim_{r\to 0^+}\theta(r)=0$ and $\lim_{r\to 0^+}r\theta'(r)=0$.}
\end{multline}

\end{itemize}
Then, problem~\eqref{S1E1} with $u_0=u^*$ has at least two positive solutions in the sense of Definition~{\rm\ref{S1D1}}, and hence uniqueness does not hold.
Specifically, one solution is $u(x,t)=u^*$ a.e.~in $\RN\times[0,\infty)$.
The other solution satisfies $u(x,t)\in L^{\infty}_{loc}((0,t_0),L^{\infty}(\RN))$ for some $t_0>0$, $u(x,t)$ satisfies problem~\eqref{S1E1} in the classical sense for $(x,t)\in\RN\times (0,t_0)$ and
\eqref{S1T1S1} holds provided that $1\le \gamma<\gamma^*$.
\end{theorem}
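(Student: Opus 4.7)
The plan is to mimic the structure used for Theorem~\ref{S1T1}, with assumption (A8) playing the role that the explicit asymptotic analysis played in the supercritical regime. Lemma~\ref{S10L1} already exhibits $u(x,t)\equiv u^*(x)$ as one positive solution of \eqref{S1E1} with $u_0=u^*$, so the real task is to produce a second, locally-in-time bounded positive solution that still attains $u^*$ at time zero in the $L^\gamma_{\ul}$-sense.

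The second solution will be built through a monotone Picard iteration dominated by a suitable quasi-self-similar supersolution $W(x,t)$. Concretely, I would take $W(x,t)=F^{-1}\!\bigl(\Phi(|x|,t)\bigr)$ with $F$ as in \eqref{F}, where $\Phi$ is modelled on a forward self-similar solution of a simpler equation: for $q_f>1$ on $z_t-\Delta z=cz^{q_f}$, and for $q_f=1$ on the equation that arises from $f(u)=e^u$ after passing to $F$. The leading-order term $\Phi(r,0)=r^2/(2N-4q_f)$ is dictated by \eqref{A8E1} so that $W(\cdot,0)=u^*$, while the self-similar time correction guarantees $W(\cdot,t)\in L^\infty(\RN)$ for $0<t<t_0$. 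Checking the supersolution property reduces, after the change of variables $z=F(W)$, to a differential inequality for $\Phi$; the mismatch between $u^*$ and the exact self-similar leading term is encoded by the correction $\theta$ in (A8), and the hypotheses $\lim_{r\to 0^+}\theta(r)=0$ and $\lim_{r\to 0^+}r\theta'(r)=0$ are precisely what is needed to absorb it.

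With such a $W$ in hand, the iteration $u_{n+1}(t)=S(t)u^*+\int_0^t S(t-s)f(u_n(s))\,\di s$ started at $u_0\equiv 0$ produces, by monotonicity of $f$ (from (A2)) and induction, a nondecreasing sequence with $u_n\le W$. The monotone limit $u$ is a classical solution on $\RN\times(0,t_0)$ by standard parabolic regularity, belongs to $L^\infty_{\loc}((0,t_0);L^\infty(\RN))$, and since $u^*$ is unbounded near the origin, $u\ne u^*$. The convergence \eqref{S1T1S1} is then obtained by writing $u(t)-u^*=(S(t)u^*-u^*)+\int_0^t S(t-s)f(u(s))\,\di s$ and using $u^*\in\calL^\gamma_{\ul}(\RN)$ from Proposition~\ref{S1P1} for the first summand together with the pointwise bound $f(u)\le f(W)$ and the condition $q_f<q_0$ for the Duhamel term.

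I expect the main obstacle to lie in the supersolution construction at the critical endpoint $q_f=q_S$. There the singular steady state saturates the Sobolev growth and the differential inequality for $\Phi$ closes only with a very tight margin; the $C^1$-regularity of $\theta$ together with $r\theta'(r)\to 0$ in (A8) is precisely what provides that margin near $r=0$. Once this comparison step is settled, the remaining arguments, namely passage to the limit in the iteration, classical regularity on $\RN\times(0,t_0)$, and recovery of the initial datum in $L^\gamma_{\ul}$, are direct adaptations of the corresponding steps in the proof of Theorem~\ref{S1T1}.
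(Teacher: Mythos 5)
There is a genuine gap in the central step, the construction of the dominating supersolution. You propose a single global function $W(x,t)=F^{-1}(\Phi(|x|,t))$ built from a forward self-similar profile, with $\Phi(r,0)=r^2/(2N-4q_f)$ ``so that $W(\cdot,0)=u^*$''. This does not hold: by (A8), $u^*=F^{-1}\bigl[\tfrac{|x|^2}{2N-4q_f}(1+\theta(|x|))\bigr]$ only \emph{asymptotically} as $|x|\to 0^+$, and away from the origin $u^*$ has no reason to coincide with, or be dominated by, $F^{-1}[c|x|^2]$; indeed the initial trace of any transformed self-similar solution is exactly of the form $F^{-1}[c|x|^2]$ and so can never equal $u^*$. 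Without $W(\cdot,0)\ge u^*$ everywhere, the monotone iteration $u_{n+1}=S(t)u^*+\int_0^tS(t-s)f(u_n)\,\di s$ is not dominated by $W$. A second, independent problem is that the transformed self-similar function satisfies \eqref{S4E5+}, whose extra gradient term has the favourable sign only where the solution is \emph{large} (where $f'(\bu)F(\bu)\le q$); so $W$ is a supersolution only near the origin for small $t$, not globally. The paper resolves both difficulties at once by \emph{gluing}: $v(\cdot,t)=\bu(\cdot,t)$ inside the ball $|x|<r(t)$ and $v(\cdot,t)=u^*$ outside, where $r(t)$ is the first intersection point of $\bu(\cdot,t)$ with $u^*$ (Lemma~\ref{S4L1}), and then verifies the supersolution property in the integral sense of Definition~\ref{S1D1} via Green's identity (Lemma~\ref{S4L2}), the transversality of the intersection supplying the correct sign of the normal-derivative jump across $\{|x|=r(t)\}$. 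This gluing is what makes $v(\cdot,0)=u^*$ exactly and confines the use of $\bu$ to the region where it really is a supersolution.

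Two further points. First, you misidentify the role of the regularity hypotheses on $\theta$ in (A8): $\theta\in C^1$ with $r\theta'(r)\to 0$ is not there to ``absorb'' an error in a differential inequality at $q_f=q_S$; it is used in Lemma~\ref{S4L1} to show that the perturbed profile converges in $C^1_{loc}$, so that the transversal first intersection persists and $r(t)$ is a $C^1$ curve (needed for the boundary integrals in Lemma~\ref{S4L2}). Second, the genuinely new verification in the critical/subcritical range, which your proposal omits, is that the regular self-similar profile actually intersects the singular profile $\varphi^*$ for $p_0<p\le p_S$; the paper gets this from Naito's result $\sup\{\ell>0:\ S_\ell\neq\emptyset\}>L$ for $p_F<p<p_{JL}$, which extends Proposition~\ref{S4P1} to this range. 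The remaining steps you describe (monotone limit, parabolic regularity, recovery of the initial datum in $L^\gamma_{\ul}$ using $u\le u^*$ and the smoothing estimate with $q_f<q_0$) do match the paper once a correct supersolution is in hand.
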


Because of (A8), nonuniqueness problem can be reduced to the following:
\begin{problem}\label{S1PR1}
Let $N>2$.
Assume that {\rm (A1)}--{\rm (A3)} and {\rm (A7)} hold.
Find conditions on $f$ such that problem~\eqref{S1E1} has a positive radial singular stationary solution $u^*$ satisfying \eqref{A8E1}.
\end{problem}
Singular solutions satisfying \eqref{A8E1} are recently constructed near the origin for $p_0<p_f<p_S$ in \cite{FI25}.
The authors of \cite{MN23b} showed that every positive radial solution diverging to $\infty$ as $|x|\to 0$ satisfies \eqref{A8E1} in the subcritical case $p_0<p_f<p_S$.
This will be published elsewhere.
On the other hand, it is well-known that $\Delta u+u^p=0$ has a positive radial singular solution that does not satisfy \eqref{A8E1} in the critical case $p=p_S$ ($q=q_S$).
Note that in the supercritical case the singular solution is unique and it always satisfies \eqref{A8E1} (Proposition~\ref{S2P1}).
Therefore, the case $p=p_S$ is exceptional in the range $p>p_0$ ($q<q_0$).

Even if a singular solution $u^*$ is constructed near the origin, it is not easy to determine whether $u^*(r)$ has a zero or not in the case $p_0<p_f< p_S$.
Pohozaev's identity does not seem to work.
Problem~\ref{S1PR1} is an open question at this moment.
\bigskip

Let us recall previous research about nonuniqueness.
Let $N>2$.
The pure power case
\begin{equation}\label{S1E19}
\begin{cases}
\partial_tu-\Delta u=|u|^{p-1}u, & x\in\RN,\ t>0,\\
u(x,0)=u_0(x), & x\in\RN
\end{cases}
\end{equation}
has been intensively studied by many researchers.
The equation \eqref{S1E19} is invariant under the transformation
$$
u(x,t)\mapsto u_{\lambda}(x,t)=\lambda^{\frac{2}{p-1}}u(\lambda x,\lambda^2 t)\quad\text{for all}\ \lambda>0.
$$
We call a solution $u$ of problem~\eqref{S1E19} the forward self-similar solution if $u(x,t)=u_{\lambda}(x,t)$ for all $\lambda>0$.
Let $\gamma_c$ be defined by 
$$
\gamma_c:=\frac{N(p-1)}{2}.
$$
Note that $\gamma^*={N/2(q_f-1)}={N(p_f-1)/2}=\gamma_c$ if $p=p_f<\infty$.
Haraux-Weissler~\cite{HW82} constructed a positive forward self-similar solution $u(t)$ of problem~\eqref{S1E19} that is in $C([0,T],L^{\gamma}(\RN))$ for $1\le \gamma<\gamma_c$ by finding a positive radial solution of
\begin{equation}\label{S1E18}
\Delta\varphi+\frac{1}{2}\xi\cdot\nabla\varphi+\frac{\varphi}{p-1}+|\varphi|^{p-1}\varphi=0.
\end{equation}
Let
$$
p_F:=1+\frac{2}{N}.
$$
If $p_F<p<p_S$, then $\left\|u(t)\right\|_{L^{\gamma}(\RN)}\to 0$ as $t\to 0^+$, and hence problem~\eqref{S1E19} with $u_0=0$ has two solutions, namely $u(t)$ and the trivial one.
Therefore, uniqueness does not hold in $C([0,T],L^{\gamma}(\RN))$.
Since self-similar solutions are closely related to nonuniqueness of solutions and asymptotic behaviors of solutions to problem~\eqref{S1E19}, the elliptic equation~\eqref{S1E18} has attracted much attention.
The details of the structure of radial solutions of equation~\eqref{S1E18} can be found in \cite{N06}.
Nonradial nonnegative solutions of equation~\eqref{S1E18} were constructed for $p_F<p<p_S$ in Naito~\cite{N04}.
If $1<p\le p_F$, then it is well-known that problem~\eqref{S1E19} does not have global-in-time positive solutions (see \cite{F66}).
Therefore, problem~\eqref{S1E18} has no positive solution for $1<p\le p_F$, since a self-similar solution gives a global-in-time solution.
For sign-changing radial solutions of equation~\eqref{S1E18} in the case $p_F<p<p_S$, see Cazenave {\it et.al.}~\cite{CDNW20}.

Let $B\subset\RN$, $N>2$, denote the unit ball.
Ni-Sacks~\cite{NS85} studied Cauchy problem
\begin{equation}\label{S1E20}
\begin{cases}
\partial_tu-\Delta u=u^{p_0}, & x\in B,\ t>0,\\
u=0, & x\in\partial B,\ t>0,\\
u(x,0)=u_0, & x\in B.
\end{cases}
\end{equation}
They constructed a positive singular stationary solution $u^*$ that belongs to $L^{p_0}(B)$.
Moreover, they showed that problem~\eqref{S1E20} with $u_0=u^*$ has a positive bounded solution $u(t)\in C([0,T],L^{p_0}(B))\cap L^{\infty}_{loc}((0,T),L^{\infty}(\RN))$.
Therefore, uniqueness of positive solutions in $C([0,T],L^{p_0}(B))$ does not hold.
When the domain is $\RN$, nonuniqueness of positive solutions of problem~\eqref{S1E19} in $C([0,T],L^{p_0}(\RN))$ was proved in Terraneo~\cite{T02}.
Later, infinitely many solutions of problem~\eqref{S1E19} in $C([0,T],L^{p_0}(\RN))$ with one initial data were constructed in Matos-Terraneo~\cite{MT03} and Takahashi~\cite{T21}.

Let $u^*$ denote the explicit positive radial singular stationary solution of problem~\eqref{S1E19}, {\it i.e.,}
\begin{equation}\label{L}
u^*(x)=L|x|^{-\frac{2}{p-1}},\quad L:=\left\{\frac{2}{p-1}\left(N-2-\frac{2}{p-1}\right)\right\}^{\frac{1}{p-1}}
\end{equation}
for $p>p_0$.
In Galaktionov-Vazquez \cite{GV97} a positive bounded solution of problem~\eqref{S1E19} with $u_0=u^*$ in the sense of the proper solution was constructed for $p_S<p<p_{JL}$, and hence uniqueness does not hold.
The definition and properties of the proper solution can be found in \cite[Section~2]{GV97}.
They also studied Cauchy-Dirichlet problem
\begin{equation}\label{S1E21}
\begin{cases}
\partial_tu-\Delta u=2(N-2)e^u, & x\in B,\ t>0,\\
u=0, & x\in\partial B,\ t>0,\\
u(x,0)=u_0, & x\in B.
\end{cases}
\end{equation}
This problem has the explicit positive singular stationary solution $u^*(|x|)=-2\log |x|$.
If $2<N<10$, then a bounded solution of problem~\eqref{S1E21} with $u_0=u^*$ in the sense of the proper solution was constructed in \cite{GV97}.
Hence, uniqueness does not hold for $2<N<10$.
See also \cite{PV95} for problem~\eqref{S1E21}.

When the domain is a ball in $\R^2$, Ioku {\it et.al.}~\cite{IRT19} and Ibrahim {\it et.al.}~\cite{IKNW21} proved nonuniqueness of positive solutions of Cauchy-Dirichlet problem.
Specifically, they constructed a positive singular stationary solution $u^*$ and a bounded solution with initial data $u^*$ when the nonlinearity $f$ has an exponential growth. 

Nonuniqueness was studied for the equation $\partial_tu=\Delta u+|x|^{\beta}|u|^{p-1}u$ in \cite{T20,CITT24,GW24}.
For nonuniqueness for other equations, see \cite[Remark 3]{IKNW21} and references therein.

Theorem~\ref{S1T1} unifies two nonuniqueness results of \cite{GV97}, since $p_S<p_f<p_{JL}$ in the power case ($p_f\neq 1$) and $2<N<10$ in the super-power case $(p_f=1)$.
However, we will construct the solution defined in Definition~\ref{S1D1} and do not directly use the proper solution.

\bigskip
Let us recall previous research about uniqueness.
A natural question is to find the largest function spaces such that uniqueness holds.
Uniqueness of solutions of problem~\eqref{S1E19} have been studied by many researchers.
Brezis-Cazenave~\cite{BC96} showed that the solution of problem~\eqref{S1E19}, which was constructed in Weissler~\cite{W80}, is unique in $C([0,T],L^{\gamma}(\RN))$ if either $\gamma=\gamma_c>p$ or both $\gamma>\gamma_c$ and $\gamma\ge p$.
When $\gamma$ is in these ranges, we see that $u^*(x)\chi_B(x)\not\in L^{\gamma}(\RN)$ and hence the singularity of $u^*$ is too strong to obtain uniqueness.
As mentioned above, uniqueness in $C([0,T],L^{p_0}(\RN))$ does not hold for $p=p_0$.
We remark that $p=p_0\iff \gamma_c=p$, and hence this case is a borderline case of \cite{BC96}.

We briefly introduce the Lorentz space $L^{p,\infty}(\RN)$ to mention the uniqueness result of Chikami~{\it et.al.} \cite{CITT24}.
For $0<p\le\infty$,
$$
L^{p,\infty}(\RN):=\left\{ u\in M(\RN);\ \left\|u\right\|_{L^{p,\infty}(\RN)}:=\sup_{\lambda>0}\lambda\left|\left\{x\in\RN;\ |u(x)|>\lambda\right\}\right|^{\frac{1}{p}}<\infty\right\},
$$
where $M(\RN)$ denote the set of measurable functions on $\RN$ and $|S|$ denotes the Lebesgue measure of the set $S\subset\RN$.
The space $\calL^{p,\infty}(\RN)$ is the closure of $L^{p,\infty}(\RN)\cap L^{\infty}(\RN)$ in the space $L^{p,\infty}(\RN)$.
The uniqueness of the solution of \eqref{S1E19} in $C([0,T],\calL^{\gamma_c,\infty}(\RN))$ was proved for $p>p_S$ in \cite[Theorem 1.3 i]{CITT24}.
We can check that $u^*\in L^{\gamma_c,\infty}(\RN)\setminus\calL^{\gamma_c,\infty}(\RN)$, and hence their theorem cannot apply to the case $u_0=u^*$.

In the case of the general nonlinearity \eqref{S1E1} there are few theorems about uniqueness, although existence and nonexistence theorems were recently developed in \cite{FI18,FHIL23,FHIL24}.

The case $N=2$ is out of our scope.
However, in Ioku~{\it et.al.}~\cite{IRT15} the uniqueness of the solution of problem~\eqref{S1E1} in $C([0,T],{\rm exp}L_0^2(\R^2))$ was established if $f$ satisfies
$$
|f(x)-f(y)|\le C|x-y|\left(e^{\lambda x^2}+e^{\lambda y^2}\right).
$$
Here, ${\rm exp}L_0^2(\R^2)$ is the closure of $C_0^{\infty}(\R^2)$ in the Orlicz space ${\rm exp}L^2(\R^2)$.
The uniqueness of the positive radial singular solution on $B$ is an open problem.
It was proved in \cite{IRT15,IRT19} that for a certain positive singular stationary solution $u^*$ on $B$ of a certain explicit nonlinearity $f$, $u^*\in{\rm exp}L^2(B)\setminus{\rm exp}L^2_0(B)$.

\begin{remark}
As mentioned above, Fujishima {\it et.al.}~\cite{FHIL24} obtained rather sharp existence and nonexistence criteria.
It follows from \cite[Corollary 1.1]{FHIL24} that there exists a unique $\lambda_0>0$ such that problem~\eqref{S1E19} with $u_0(x)=F^{-1}[{\lambda^{-1}|x|^2}]\chi_{B}(x)$ has a solution for $0<\lambda<\lambda_0$ and no solution for $\lambda>\lambda_0$.
However, the exact value of $\lambda_0$ is an open problem.
Theorem~{\rm \ref{S1T1}} indicates that $\lambda_0\ge 2N-4q_f$ if $q_{JL}<q<q_S$.
When $q_f=1$, the authors of~\cite{HM25} proved that $\lambda_0=2N-4$ under additional assumptions on solutions and the nonlinearity.
When $f(u)=u^p$, $p\ge p_{JL}$, it was proved in \cite{GV97} that $\lambda_0=2N-4q$ in the framework of the proper solution, where $q:=p/(p-1)$.
If $p_S<p<p_{JL}$, then $\lambda_0>2N-4q$ by Souplet-Weissler~\cite{SW03}.
\end{remark}

Let us mention technical details.
We will see in Section~3 that the positive singular solution $u^*$ given in Proposition~\ref{S1P1} is a positive singular stationary solution of problem~\eqref{S1E1} in the sense of Definition~\ref{S1D1}.
We also construct a positive bounded solution $u(t)$ of problem~\eqref{S1E1} with $u_0=u^*$.
This solution is in $L^{\infty}_{loc}((0,T),L^{\infty}(\RN))$, and hence $u(t)$ is different from $u^*$.

The method of the construction of $u(t)$ is based on the comparison principle and not on the contraction mapping theorem.
We construct a supersolution $v(t)$ defined by \eqref{S4E7} with \eqref{S4E4+}, combining the singular solution and a transformation of a self-similar solution.
Let $u_{0n}:=\min\{u^*,n\}$.
Then, $u_{0n}\in BUC(\RN)$, and hence \eqref{S1E1} with $u_0=u_{0n}$ has a classical solution $u_n(t)$ for $0\le t<t_0$.
Since $u_n$ is a classical solution, the maximum principle works and we obtain the monotone sequence
$$
u_1(t)<u_2(t)<\cdots <u_n(t)<\cdots <v(t)
$$
for $0<t<t_0$.
Here, the inequality $u_n(t)<v(t)$ follows from the definition of $u_n$.
Letting $n\to\infty$, we obtain the limit function
$$
u(t):=\lim_{n\to\infty}u_n(t),
$$
which satisfies \eqref{S1D1E2} by the monotone convergence theorem.
Since $u(t)\le v(t)$ and $v(t)$ is bounded for each $t>0$, $u(t)$ is the desired bounded solution.
This method is inspired by the definition of the proper solution which is explained in \cite{GV97}.
However, we have to construct a bounded solution $u(t)$ in the sense of Definition~\ref{S1D1}.\\

This paper consists of six sections.
In Section~2 we give three examples of $f$.
In Section~3 we construct a positive radial singular solution $u^*$ and prove Proposition~\ref{S1P1} and Lemma~\ref{S10L1}.
In Section~4 we recall basic properties of forward self-similar solutions of problem~\eqref{S1E1} in the cases $f(u)=u^p$ and $e^u$.
Combining the self-similar solution and the singular stationary solution, we construct a supersolution $v$ of problem~\eqref{S1E1} in the sense of Definition~\ref{S1D1}.
In Section~5 we construct a bounded solution of problem~\eqref{S1E1} with $u_0=u^*$, using the supersolution $v$.
Then we prove Theorem~\ref{S1T1}.
In Section~6 we briefly prove Theorem~\ref{S1T2}, modifying the proof of Theorem~\ref{S1T1}.

\section{Examples}
\subsection{Example 1}
The first example is
$$
f(u)=u^{\beta}e^{u^{\gamma}},\quad \beta\ge p_S,\quad \gamma>1.
$$
We easily see that {\rm (A1)}, {\rm (A2)} and {\rm (A4)} hold.
See \cite[Example 1]{HM25} for details.
By direct calculation we have
\begin{equation}\label{S2E1}
\frac{f'(u)^2}{f(u)f''(u)}=\frac{\beta^2+2\beta\gamma u^{\gamma}+\gamma^2u^{2\gamma}}{\beta(\beta-1)+\gamma(2\beta+\gamma-1)u^{\gamma}+\gamma^2u^{2\gamma}},
\end{equation}
and hence (A3) with $q_f=1$ holds.
Moreover, the RHS of \eqref{S2E1} is less than or equal to $1$ for large $u>0$, and hence (A6) holds.
If $2<N<10$, then $q_{JL}<q_f=1<q_S$, and hence (A5) holds.
Therefore, the conclusion of Theorem~\ref{S1T1} holds for $2<N<10$.
\subsection{Example 2}
The second example is
$$
f(u)=\chi(u) e^{au},\quad a=20,
$$
where $\chi(u):=\int_0^u\chi'(s)ds$ and
$$
\chi'(u):=
\begin{cases}
0 & \text{for}\ u\ge 4;\\
5(u-4)^4 & \text{for}\ 3\le u\le 4;\\
10-5(u-2)^4 & \text{for}\ 1\le u\le 3;\\
5u^4 & \text{for}\ 0\le u\le 1.
\end{cases}
$$
In particular, $f\in C^1[0,\infty)\cap C^2(0,\infty)$ and
$$
f(u)=\begin{cases}
u^5 e^{au} & \text{for}\ 0\le u\le 1;\\
20e^{au} & \text{for}\ u\ge 4.
\end{cases}
$$
We easily see that {\rm (A1)}, {\rm (A2)} and {\rm (A4)} hold.
See \cite[Example 2]{HM25} for details.
By direct calculation we have
$$
\frac{f'(u)^2}{f(u)f''(u)}=1,
$$
for large $u>0$, and hence (A3) with $q_f=1$ and (A6) hold.
If $2<N<10$, then $q_{JL}<q_f=1<q_S$, and hence (A5) holds.
Therefore, the conclusion of Theorem~\ref{S1T1} holds for $2<N<10$.

\subsection{Example 3}
The third example is
$$
f(u)=u^{\beta}+u^{\gamma},\quad p_S<\gamma<\beta<p_{JL}.
$$
It is easy to check that {\rm (A1)}, {\rm (A2)} and {\rm (A4)} hold.
Since
$$
\frac{f'(u)^2}{f(u)f''(u)}
=\frac{\beta^2u^{2(\beta-\gamma)}+2\beta\gamma u^{\beta-\gamma}+\gamma^2}{\beta(\beta-1)u^{2(\beta-\gamma)}+(\beta^2-\beta+\gamma^2-\gamma)u^{\beta-\gamma}+\gamma(\gamma-1)},
$$
(A3) with $q_f=\beta/(\beta-1)$ holds, and hence $p_f=\beta$.
Since $p_S<p_f<p_{JL}$, we see that $q_{JL}<q_f<q_S$, and hence (A5) holds.
Since $q_f\neq 1$, (A6) automatically holds.
Therefore, the conclusion of Theorem~\ref{S1T1} holds.

\section{Singular solution}
\begin{lemma}\label{S3L2}
Suppose that {\rm (A1)}--{\rm (A3)}.
Then,
\[
\lim_{u\to\infty}f'(u)F(u)=q_f.
\]
\end{lemma}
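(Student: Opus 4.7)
The plan is to apply L'H\^opital's rule to the quotient $F(u)/\bigl(1/f'(u)\bigr)$ as $u\to\infty$. This choice of numerator and denominator is dictated by (A3): differentiating them produces exactly the expression $f'(u)^2/(f(u)f''(u))$ whose limit is $q_f$.

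Before invoking L'H\^opital, I would first verify that the ratio has the indeterminate form $0/0$. The limit $F(u)\to 0$ is immediate from the convergence of the improper integral defining $F$. For $1/f'(u)\to 0$, I would observe that (A2) makes $f'$ strictly increasing on $(0,\infty)$, while the identity $1/p_f+1/q_f=1$ derived in the introduction, together with $q_f\in[1,\infty)$ from (A3), gives $p_f>1$; hence $f$ grows faster than linearly and $f'(u)\to\infty$.

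Once these are in place, since $F'(u)=-1/f(u)$ and $(1/f'(u))'=-f''(u)/f'(u)^2$, with the latter nonzero on $(0,\infty)$ by (A2), L'H\^opital's rule yields
\[
\lim_{u\to\infty} f'(u)F(u)
= \lim_{u\to\infty}\frac{F(u)}{1/f'(u)}
= \lim_{u\to\infty}\frac{-1/f(u)}{-f''(u)/f'(u)^2}
= \lim_{u\to\infty}\frac{f'(u)^2}{f(u)f''(u)}
= q_f,
\]
where the last equality is precisely (A3).

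There is essentially no major obstacle here: the proof is a single application of L'H\^opital. The only point needing a short separate argument is $f'(u)\to\infty$ in the super-power regime $q_f=1$ (so $p_f=\infty$); this follows because monotonicity of $f'$ combined with the integrability of $1/f$ at infinity (which is required for $F$ to be finite) rules out any bounded or sublinear behavior of $f$.
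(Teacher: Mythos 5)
Your proof is correct and follows essentially the same route as the paper: L'H\^opital's rule applied to $F(u)/(1/f'(u))$ after checking the $0/0$ form, with the derivative quotient reducing exactly to the expression in (A3). Your justification that $f'(u)\to\infty$ (via superlinear growth forced by the finiteness of $F$) is in fact slightly more explicit than the paper's, which simply cites (A1)--(A2).
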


\begin{proof}
It is obvious that
$$
F(u)\to 0\quad \text{as}\ u\to\infty.
$$
By {\rm (A1)} and {\rm (A2)} we see that $f'(u)\to\infty$ as $u\to\infty$.
Hence
$$
\frac{1}{f'(u)}\to 0\quad\text{as}\ u\to\infty.
$$
By L'Hospital's rule we have
$$
\lim_{u\to\infty}\frac{F(u)}{1/f'(u)}=\lim_{u\to\infty}\frac{-1/f(u)}{-f''(u)/f'(u)^2}
=\lim_{u\to\infty}\frac{f'(u)^2}{f(u)f''(u)}=q_f.
$$
\end{proof}

Hereafter in this section we consider the equation
\begin{equation}\label{ODE}
u''+\frac{N-1}{r}u'+f(u)=0.
\end{equation}
We call a solution $u^*$ of equation \eqref{ODE} a singular solution if $u^*(r)\to\infty$ as $r\to 0^+$.

Let $u(r,\alpha)$, $\alpha>0$, denote a solution of the initial value problem~\eqref{ODE} with $u(0,\alpha)=\alpha$ and $u_r(0,\alpha)=0$.
Because of Lemma~\ref{S3L2} and (A1)--(A3), all assumptions of \cite[Theorem~1.1 and Lemma~2.5]{MN23}, which are stated in Proposition~\ref{S2P1}, are satisfied.
\begin{proposition}\label{S2P1}
Suppose that $N>2$ and that {\rm (A1)}--{\rm (A3)} hold.
If $q_f<q_S$, then there exists a unique positive singular solution $u^*$ of equation \eqref{ODE} for $0<r\le r_1$ with some $r_1>0$, and the regular solution $u(r,\alpha)$ satisfies
\[
u(r,\alpha)\to u^*(r)\ \ \textrm{in}\ \ C^2_{loc}(0,r_1]\ \ \textrm{as}\ \ \alpha\to\infty.
\]
Furthermore, the positive singular solution $u^*$ satisfies
\begin{equation}\label{S2P1E2}
-r^{N-1}u^{*\prime}(r)=\int_0^rf(u^*(s))s^{N-1} \, \di s
\end{equation}
for $0<r<r_1$ and
\begin{equation}\label{S2P1E3}
u^*(r)=F^{-1}\left[\frac{r^2}{2N-4q_f}(1+\theta(r))\right]\ \ \textrm{as}\ \ r\to 0^+,
\end{equation}
where $F^{-1}$ is the inverse function of $F(u)$ defined by \eqref{F} and $\theta(r)$ satisfies $\lim_{r\to 0^+}\theta(r)=0$,
\begin{equation}\label{S2P1E4}
\theta(r)\in C^1(0,r_1),\quad\text{and}\quad\lim_{r\to 0^+}r\theta'(r)=0.
\end{equation}
\end{proposition}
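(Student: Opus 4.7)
The plan is to follow the ODE analysis of Miyamoto--Naito, reducing the problem to a perturbation equation near $r=0$. The guiding Ansatz is
\[
u(r) = F^{-1}\!\left[\frac{r^2}{2N - 4q_f}\bigl(1 + \theta(r)\bigr)\right],
\]
motivated by the following heuristic. Setting $v(r) := F^{-1}(cr^2)$ with $c := 1/(2N - 4q_f)$, differentiating the identity $F(v) = cr^2$ gives $v'(r) = -2cr f(v)$, and then a direct computation yields
\[
v'' + \frac{N-1}{r} v' + f(v) = f(v)\bigl[1 - 2cN + 4c^2 r^2 f'(v)\bigr].
\]
Since $r^2 = F(v)/c$, Lemma~\ref{S3L2} gives $4c^2 r^2 f'(v) = 4c\,F(v) f'(v) \to 4c q_f$ as $r \to 0^+$. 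The bracket thus vanishes in the limit precisely when $c = 1/(2N - 4q_f)$, confirming the predicted leading order.

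Substituting the full Ansatz into \eqref{ODE} and expanding using Lemma~\ref{S3L2} together with the relation $f(u)f''(u)/f'(u)^2 = 1 - 1/q_f + o(1)$ coming from (A3), I would reduce the problem to a first-order perturbation equation
\[
r\theta'(r) = G(r, \theta(r)),
\]
where $G(r, 0) \to 0$ as $r \to 0^+$ and $G$ is Lipschitz in $\theta$ on a small neighborhood of the origin. A Banach fixed point argument in a small ball of $C([0, r_1])$ with $\theta(0)=0$, for $r_1$ sufficiently small, then produces a unique $\theta$ with $\theta(r) \to 0$ as $r \to 0^+$; differentiating the fixed-point equation yields the $C^1(0, r_1)$ regularity and the decay $r\theta'(r) \to 0$.

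Having constructed $u^*$ near the origin, the remaining assertions follow by a standard shooting argument: the hypothesis $q_f < q_S$ provides a priori $C^2_{loc}(0, r_1]$ bounds for the regular solutions $u(\cdot, \alpha)$, which are strictly decreasing in $r$ by (A1)--(A2) and the integrated form of \eqref{ODE}. Any $C^2_{loc}$ subsequential limit as $\alpha \to \infty$ is a singular solution, hence coincides with $u^*$ by the uniqueness just established. The identity \eqref{S2P1E2} then follows by integrating $(r^{N-1} u^{*\prime})' = -r^{N-1} f(u^*)$ over $(0, r)$, once one verifies $r^{N-1} u^{*\prime}(r) \to 0$ as $r \to 0^+$; this is a consequence of \eqref{S2P1E3} together with the elementary inequality $q_f < q_S < N/2$ valid for $N > 2$.

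The main technical obstacle will be the careful bookkeeping of error terms in the perturbation expansion: guaranteeing the refined decay $r\theta'(r) \to 0$, rather than merely $\theta(r) \to 0$, requires tracking the precise rates in Lemma~\ref{S3L2} and (A3) through the substitution. This is where the \emph{existence} of the limit $q_f$ in hypothesis (A3), as opposed to mere upper/lower bounds on the ratio, plays the decisive role.
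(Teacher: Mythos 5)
Your heuristic computation identifying $c=1/(2N-4q_f)$ is correct, but the core of your argument has a genuine gap: substituting the Ansatz $u=F^{-1}[cr^2(1+\theta(r))]$ into the \emph{second-order} equation \eqref{ODE} produces a second-order equation for $\theta$, not the scalar first-order equation $r\theta'(r)=G(r,\theta(r))$ you claim to obtain. There is no reduction of order here. The actual analysis (carried out in \cite{M18} and \cite{MN23}, which is what the paper relies on) passes to the Emden--Fowler variable $t=\log r$ and studies the resulting first-order \emph{system} for $(x,\dot x)$ with $x(t)=\theta(e^t)$ near an equilibrium as $t\to-\infty$; constructing a trajectory with $x(t)\to 0$ and $\dot x(t)\to 0$ is the hard step, and a Banach fixed point in a small ball of $C([0,r_1])$ for a scalar equation does not substitute for it. A second, independent gap concerns uniqueness: even granting your fixed-point argument, it yields uniqueness only among solutions of the prescribed form with small $\theta$, whereas the proposition asserts uniqueness of the positive singular solution of \eqref{ODE} outright. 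Showing that \emph{every} positive solution blowing up at the origin has the asymptotics \eqref{S2P1E3} is a separate nontrivial step (this is where $q_f<q_S$ is really used in \cite{MN23}), and it is also needed for your shooting argument, since a $C^2_{loc}$ subsequential limit of the regular solutions $u(\cdot,\alpha)$ is not a priori singular --- it could be regular or degenerate --- and the uniform-in-$\alpha$ bounds you invoke are asserted rather than proved.

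For comparison: the paper does not reprove these facts. Everything in the proposition except \eqref{S2P1E4} is quoted directly from \cite[Theorem 1.1 and Lemma 2.5]{MN23} after checking, via Lemma~\ref{S3L2}, that the hypotheses there are met; the only new argument is for \eqref{S2P1E4}, obtained by identifying $u^*$ with the solution built in \cite[Lemma 3.1]{M18} and observing that $r\theta'(r)=x'(t)\to 0$ as $t=\log r\to-\infty$ for that construction. Your closing steps (deriving \eqref{S2P1E2} by integrating $(r^{N-1}u^{*\prime})'=-r^{N-1}f(u^*)$ and checking $r^{N-1}u^{*\prime}(r)\to 0$ using $q_f<q_S<N/2$) are sound once \eqref{S2P1E3} is in hand, but the construction and uniqueness themselves are not established by the proposal as written.
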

All statements of Proposition~\ref{S2P1} except \eqref{S2P1E4} are the same as \cite[Theorem 1.1 and Lemma 2.5]{MN23}.
We briefly prove \eqref{S2P1E4}.
Because of the uniqueness of the singular solution, $u^*$ given in Proposition~\ref{S2P1} and $u^*$ constructed in \cite[Theorem A]{M18} are identical.
Let $t:=\log r$ and $x(t):=\theta(r)=-1+(2N-4q_f)r^{-2}F[u^*(r)]$.
The solution constructed in \cite[Lemma 3.1]{M18} satisfies $x(t)$ is of class $C^1$ and $x'(t)\to 0$ as $t\to-\infty$.
Then, we obtain
$$
r\theta'(r)=x'(t)\to 0\quad\text{as}\ t\to -\infty.
$$
Thus, \eqref{S2P1E4} holds.\\

In the authors' previous paper \cite{HM25} a singular stationary solution was constructed in the case $q_f=1$.
Proofs of Lemmas~\ref{S3L3}, Proposition~\ref{S1P1} and Lemma~\ref{S10L1} in the present paper, which treat the case $q_f<q_S$, are slight modifications of those of \cite[Lemma 3.3, Proposition 1.2 and Lemma 4.2]{HM25}, respectively.
However, we show proofs for self-contained purpose.
\begin{lemma}\label{S3L3}
Suppose that $N>2 $ and that {\rm (A1)}--{\rm (A3)} hold.
Let $u^*$ be the unique positive singular solution of equation \eqref{ODE} on $0<r<r_1$ given in Proposition~{\rm \ref{S2P1}}.
For any small $\delta>0$, there exist $R_0\in (0,r_1)$ and $C>0$ such that
\begin{equation}\label{S3L3E0}
u^*(r) \le C r^{2-2q_f-2\delta}  \quad \mbox{and} \quad
|u^{*\prime}(r)| \le C r^{1-2q_f-2\delta}
\end{equation}
for $r\in (0,R_0)$.
\end{lemma}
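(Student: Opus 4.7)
The plan is to derive both inequalities in \eqref{S3L3E0} from the asymptotic formula \eqref{S2P1E3} for $u^*$ together with the quantitative behavior of $F$ at infinity extracted from Lemma~\ref{S3L2}.

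First, I would upgrade Lemma~\ref{S3L2} to a polynomial bound on $F$ near infinity. Using $F'(u)=-1/f(u)$, one computes
\[
\frac{d\log F(u)}{d\log u}=-\frac{u f'(u)}{f(u)}\cdot\frac{1}{f'(u)F(u)}\longrightarrow -\frac{p_f}{q_f}=-(p_f-1)
\]
as $u\to\infty$, where the limit uses Lemma~\ref{S3L2}, the definition of $p_f$, and the identity $1/p_f+1/q_f=1$. Integrating this logarithmic derivative, for every $\eta>0$ there is $C(\eta)>0$ with $F(u)\le C u^{-(p_f-1-\eta)}$ for all large $u$; since $F$ is strictly decreasing (by (A2)), inversion yields $F^{-1}(t)\le C t^{-1/(p_f-1-\eta)}$ for all small $t>0$.

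Second, I would apply this bound to \eqref{S2P1E3}. Since $\theta(r)\to 0$, for $r$ small the argument of $F^{-1}$ is comparable to $r^2$, so the preceding step gives $u^*(r)\le C r^{-2/(p_f-1-\eta)}$. The algebraic identity $-2/(p_f-1)=2-2q_f$ shows that the exponent $-2/(p_f-1-\eta)$ tends to $2-2q_f$ as $\eta\to 0^+$, so given $\delta>0$ one chooses $\eta$ so small that $-2/(p_f-1-\eta)\ge 2-2q_f-2\delta$, yielding $u^*(r)\le Cr^{2-2q_f-2\delta}$.

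Third, the derivative bound follows from \eqref{S2P1E2}. From the same asymptotics one extracts $f(u)F(u)\sim(q_f-1)u$ at infinity; combined with $F(u^*(r))\ge cr^2$ from \eqref{S2P1E3} for $r$ small, this gives $f(u^*(r))\le Cu^*(r)/r^2\le Cr^{-2q_f-2\delta}$. Substituting into \eqref{S2P1E2} and performing the $s$-integration,
\[
|u^{*\prime}(r)|\le Cr^{-(N-1)}\int_0^r s^{N-1-2q_f-2\delta}\,\di s \le Cr^{1-2q_f-2\delta},
\]
where the integral converges because $q_f<q_S=(N+2)/4<N/2$ forces $N-2q_f-2\delta>0$ for $\delta$ small.

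The main obstacle is the quantitative inversion of $F$: Lemma~\ref{S3L2} supplies only a limit, and converting it into an effective polynomial bound for $F(u)$ (and then $F^{-1}(t)$) requires carefully tracking the slack parameter $\eta$ and verifying that the $(1+\theta(r))$ correction in \eqref{S2P1E3} does not spoil the leading power of $r$, which follows from $\theta(r)\to 0$. Once the upper bound for $u^*(r)$ is established, the derivative estimate is a direct integration.
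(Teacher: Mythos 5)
Your argument is sound for $q_f>1$, but as written it does not cover the case $q_f=1$, which is squarely within the scope of the lemma (only (A1)--(A3) are assumed, and $q_f=1$ is precisely the super-power/exponential case treated in Examples 1 and 2 and in Theorem~\ref{S1T1}). When $q_f=1$ you have $p_f=\infty$, so the limit $\frac{d\log F(u)}{d\log u}\to-(p_f-1)$ is $-\infty$, the bound $F(u)\le Cu^{-(p_f-1-\eta)}$ and the inversion exponent $-1/(p_f-1-\eta)$ are not defined, and the asymptotic $f(u)F(u)\sim(q_f-1)u$ degenerates to $f(u)F(u)\sim 0\cdot u$ (for $f(u)=e^u$ one has $f(u)F(u)\equiv 1$, so it is $o(u)$, not an equivalence). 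The conclusion you need in that case is $u^*(r)\le Cr^{-2\delta}$, and your scheme can be patched -- for every $K>0$ one gets $F(u)\le C_Ku^{-K}$ for large $u$, hence $F^{-1}(t)\le Ct^{-1/K}$, and one chooses $K\ge 1/\delta$; likewise only the one-sided bound $f(u)F(u)\le Cu$ is needed in the derivative step, and that survives at $q_f=1$ -- but the proposal never acknowledges that $p_f$ may be infinite, so the case split is missing.

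For comparison, the paper works entirely on the $q_f$ side and never inverts $F$ through a power law: from $f'(u)F(u)\le q_f+\delta$ it deduces $f(u)F(u)^{q_f+\delta}\le f(M)F(M)^{q_f+\delta}$, i.e.\ a lower bound for $-F'(u)/F(u)^{q_f+\delta}$, and integrates that in $u$ up to $u^*(r)$ to get $u^*(r)\le CF(u^*(r))^{1-q_f-\delta}+M$; since the exponent $q_f-1+\delta$ in the resulting antiderivative is positive even when $q_f=1$, the $\delta$-regularization makes the computation uniform in $q_f\ge 1$. The same inequality $f(u)F(u)^{q_f+\delta}\le\mathrm{const}$ directly yields $f(u^*(r))\le CF(u^*(r))^{-q_f-\delta}\le Cr^{-2q_f-2\delta}$ for the derivative estimate, so no separate bound on $f(u)F(u)$ is needed. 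Your route via $F(u)\le Cu^{-(p_f-1-\eta)}$ and inversion is a legitimate dual of this and is fine where $p_f<\infty$; to make it a complete proof you must add the $q_f=1$ branch (or reformulate the decay of $F$ in terms of $q_f$ rather than $p_f$, as the paper does).
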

\begin{proof}
Let $\delta>0$ be sufficiently small.
By Lemma~\ref{S3L2} we have
\[
\lim_{u\to\infty} f'(u) F(u)=q_f.
\]
There exists $M>0$ such that $f'(u) F(u) \le q_f+\delta$ for all $u\ge M$.
Then
\[
\frac{f'(u)}{f(u)} \le \frac{q_f+\delta}{f(u) F(u)} = -\frac{(q_f+\delta) F'(u)}{F(u)}
\]
for all $u\ge M$.
Integrating this inequality in $u$ over $(M, u)$, we have
\[
\log \frac{f(u)}{f(M)} \le - (q_f+\delta)  \log\frac{F(u)}{F(M)}
\]
for all $u\ge M$.
Therefore,
\begin{equation}\label{S3L3E1}
\frac{1}{f(M)F(M)^{q_f+\delta}} \le \frac{1}{f(u)F(u)^{q_f+\delta}}  = - \frac{F'(u)}{F(u)^{q_f+\delta}}
\end{equation}
for all $u\ge M$.
Since $ u^*(r) \to \infty$ as $r\to 0^+$, there exists $r_2>0$ such that
\[
u^*(r) > M \quad \mbox{for}\ r\in (0,r_2).
\]
Integrating \eqref{S3L3E1} in $u$ over $(M, u^*(r))$, we have
\[
u^*(r) < \frac{f(M)F(M)^{q_f+\delta}}{q_f-1+\delta} (F(u^*(r))^{1-q_f-\delta}- F(M)^{1-q_f-\delta}) +M
\]
for $r\in (0,r_2)$.
Since $F(u^*(r)) = (2N-4q_f)^{-1}r^2(1+o(1))$ as $r\to 0^+$ (by \eqref{S2P1E3}),
there exist $R_0\in (0, r_2)$ and $C>0$ such that
\[
u^*(r) \le C r^{2-2q_f-2\delta}  \quad \mbox{for}\ r\in (0,R_0).
\]

By \eqref{S3L3E1} we have
\begin{equation}\label{S3L3E2}
f(u^*(r)) < \frac{f(M)F(M)^{q_f+\delta}}{F(u^*(r))^{q_f+\delta}} \le C r^{-2q_f-2\delta}
\end{equation}
for $r\in (0,R_0)$.
By (\ref{S2P1E2}) we have
\[
0< -r^{N-1} u^{*\prime}(r) 
= \int_0^r f(u^*(s)) s^{N-1} \, \di s
\le C \int_0^r s^{N-1-2q_f-2\delta} \, \di s \le C r^{N-2q_f-2\delta}.
\]
Then we have 
\[
|u^{*\prime}(r)| \le C r^{1-2q_f-2\delta} \quad \mbox{for} \quad r\in (0,R_0).
\]
The proof is complete.
\end{proof}

\begin{proof}[Proof of Proposition~{\rm \ref{S1P1}}]
Let $u^*$ denote the unique positive singular solution of equation~\eqref{ODE} on $0<r<r_1$ given in Proposition~\ref{S2P1}.
We extend the domain of $u^*$ such that $u^*$ satisfies equation \eqref{ODE} for all $r>0$.

We show that $u^*(r)>0$ for $r>0$.
Let $P(u^*(r))$ be defined by
\[
P(u^*(r)):=\frac{1}{2}r^Nu^{*\prime}(r)^2+r^NF_0(u^*(r))+\frac{N-2}{2}r^{N-1}u^*(r)u^{*\prime}(r)
\]
for $r>0$,
where
\[
F_0(u):=\int_0^uf(s) \, \di s.
\]
Using (\ref{ODE}), we have
\[
\frac{d}{dr}P(u^*(r))=-\frac{N-2}{2}r^{N-1}Q(u^*(r)),
\]
where $Q$ is defined in (A4).
Integrating the above equality over $(\rho,r)$, we have
\begin{equation}\label{S1P1E5}
P(u^*(r))-P(u^*(\rho))=-\frac{N-2}{2}\int_{\rho}^rs^{N-1}Q(u^*(s)) \, \di s.
\end{equation}
Since $q_f < q_S$, we can take a small $\delta >0$ such that 
$N+2-4q_f-4\delta>0$.
By L'Hospital's rule we have
\[
\lim_{\rho\to 0^+}\left|\frac{F_0(u^*(\rho))}{\rho^{-N}}\right|
=\lim_{\rho\to 0^+}\left|\frac{f(u^*(\rho))u^{*\prime}(\rho)}{-N\rho^{-N-1}}\right|
\le\lim_{\rho\to 0^+}C\rho^{N+2-4q_f-4\delta}=0,
\]
where Lemma~\ref{S3L3} and (\ref{S3L3E2}) are used.
By the definition of $P$ we have
\begin{align*}
|P(u^*(\rho))|
&\le C\rho^{N+2-4q_f-4\delta}+|\rho^NF_0(u^*(\rho))|+C\rho^{N+2-4q_f-4\delta}\\
&\to 0\quad \mbox{as}\quad \rho\to 0^+.
\end{align*}
Assume that there exists $r_0>0$ such that $u^*(r_0)=0$.
Then, by Hopf's lemma, $u^{*\prime}(r_0)<0$.
Letting $\rho\to 0^+$ in (\ref{S1P1E5}) with $r=r_0$, we have
\[
0<P(u^*(r_0))-0=-\frac{N-2}{2}\int_0^{r_0}s^{N-1}Q(u^*(s)) \, \di s\le 0,
\]
which is a contradiction.
Thus, $r_0=\infty$, and hence $u^*(r)>0$ for $0\le r<\infty$.

Next, assume that there exists $r_0^\prime>0$ such that $u^{*\prime}(r_0^\prime) =0$.
Since $u^{*}(r_0^\prime)>0$, we have
\[
P(u^{*}(r_0^\prime)) = r_0^{\prime N} F_0(u^{*}(r_0^\prime)) >0.
\]
Therefore, we arrive at the same conclusion as the above inequality with $r_0$ replaced by $r_0^\prime$, which is a contradiction. 
Thus, $u^{*\prime}$ does not change its sign on $(0,\infty)$,
and hence $u^{*\prime}(r)<0$ for all $r>0$ since $u^{*\prime}(r) \to \infty$ as $r\to 0^+$.
We can easily see that the other properties in (\ref{SS}) are satisfied.

Hereafter, we show that $u^*\in\calL^\gamma_{\ul}(\R^N)$ for $1\le \gamma<\gamma^*$.
Since $u^{*\prime}(r)<0$ for all $r>0$, at least
$\limsup_{r\to \infty} |u^{*}(r)|<\infty$
holds.
Thus, it suffices to consider integrability only near the origin.
Let $u_n(x):=\min\{n,u^*(x)\}$ for  $n\ge 1$.
Then it is obvious that $u_n\in BUC(\RN)$.
Fix $1\le \gamma <\gamma^*$ arbitrarily.
Since $N+ \gamma(2-2q_f)>0$, we can take a sufficiently small
$\delta>0$  such that $N+\gamma(2-2q_f-2\delta)>0$.
By Lemma~\ref{S3L3} we have
\[
\int_{B(0,2)}|u^*(x)-u_n(x)|^\gamma\,\di x\le C\int_0^2r^{N+\gamma(2-2q_f-2\delta)-1}\,\di r<\infty.
\]
By the dominated convergence theorem we have
\[
\lim_{n\to\infty}\int_{B(0,2)}|u^*(x)-u_n(x)|^\gamma\,\di x=0,
\]
which indicates that $\left\|u^*-u_n\right\|_{L^\gamma_{\ul}(\RN)}\to 0$ as $n\to\infty$.
Thus, $u^*\in\calL^\gamma_{\ul}(\RN)$ for $1\le \gamma <\gamma^*$.
The proof is complete.
\end{proof}

We use the following proposition to prove Lemma~\ref{S10L1}.
\begin{proposition}[{\cite[Proposition~2.2]{MT06}}]\label{S4P2}
For $1\le p<\infty$, let $\calL^p_{ul}(\RN)$ be defined by \eqref{BUC}.
The following assertions are equivalent:
\begin{itemize}
\item[{(i)}] $u\in\calL^p_{ul}(\RN)$;
\item[{(ii)}] $\displaystyle\lim_{|y|\to 0^+}\left\|u(\,\cdot\,+y)-u(\,\cdot\,)\right\|_{L^p_{ul}(\RN)}=0$;
\item[{(iii)}] $\displaystyle\lim_{t\to 0^+}\left\|S(t)u-u\right\|_{L^p_{ul}(\RN)}=0$.
\end{itemize}
\end{proposition}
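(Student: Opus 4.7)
The plan is to establish the cyclic chain (i) $\Rightarrow$ (ii) $\Rightarrow$ (iii) $\Rightarrow$ (i), using throughout the translation invariance $\|u(\cdot + y)\|_{L^p_{ul}(\RN)} = \|u\|_{L^p_{ul}(\RN)}$ which is immediate from the definition. For (i) $\Rightarrow$ (ii), I would invoke the density of $BUC(\RN)$ in $\calL^p_{ul}(\RN)$: given $\e > 0$, pick $\phi \in BUC(\RN)$ with $\|u - \phi\|_{L^p_{ul}} < \e/3$, and note that by translation invariance and the triangle inequality
\[
\|u(\cdot+y) - u(\cdot)\|_{L^p_{ul}} \le 2\|u - \phi\|_{L^p_{ul}} + \|\phi(\cdot+y) - \phi(\cdot)\|_{L^p_{ul}}.
\]
Uniform continuity of $\phi$ gives $\|\phi(\cdot+y) - \phi(\cdot)\|_{L^\infty} \to 0$ as $|y|\to 0$, and the elementary bound $\|v\|_{L^p_{ul}} \le |B(0,1)|^{1/p}\|v\|_{L^\infty}$ closes this implication.

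For (ii) $\Rightarrow$ (iii), I would write $S(t)u(x) - u(x) = \int G_t(y)(u(x-y) - u(x))\,\di y$ using $\int G_t = 1$, and apply Minkowski's integral inequality for the $L^p_{ul}$ norm to get
\[
\|S(t)u - u\|_{L^p_{ul}} \le \int_{\RN} G_t(y)\|u(\cdot - y) - u(\cdot)\|_{L^p_{ul}}\,\di y.
\]
Splitting the integral at some $|y| = \delta$: the piece $|y| < \delta$ is controlled by (ii), and the tail $|y|\ge\delta$ is bounded by $2\|u\|_{L^p_{ul}}\int_{|y|\ge\delta}G_t(y)\,\di y$, which vanishes as $t \to 0^+$ by standard Gaussian tail estimates.

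For (iii) $\Rightarrow$ (i), the strategy is to show that $S(t)u \in BUC(\RN)$ for every fixed $t > 0$ whenever $u \in L^p_{ul}(\RN)$; once this is established, (iii) realizes $u$ as an $L^p_{ul}$-limit of functions in $BUC(\RN)$, so $u \in \calL^p_{ul}(\RN)$ by the definition~\eqref{BUC}. To verify $S(t)u \in BUC(\RN)$, I would cover $\RN$ by unit balls $B(z_k,1)$ indexed by $k \in \Z^N$ and apply H\"older's inequality on each ball to bound $|S(t)u(x)| \le \sum_k \|G_t(x-\cdot)\|_{L^{p'}(B(z_k,1))}\|u\|_{L^p_{ul}}$; the Gaussian decay of the $L^{p'}$ norms in $|x - z_k|$ makes the sum converge to a bound uniform in $x$. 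Uniform continuity then follows from the same lattice decomposition applied to $\nabla G_t$ and the mean value theorem, yielding a Lipschitz estimate $|S(t)u(x+h) - S(t)u(x)| \le C(t)|h|\,\|u\|_{L^p_{ul}}$. The main obstacle is precisely this last step: converting a merely uniformly-locally $L^p$ datum into a $BUC$ function through a single application of the heat semigroup requires careful summation of Gaussian tails against the lattice of local $L^p$-norms, while the rest of the proof reduces to standard density and Minkowski manipulations.
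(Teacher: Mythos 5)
The paper gives no proof of this proposition: it is imported verbatim from Maekawa--Terasawa \cite[Proposition~2.2]{MT06}, so there is nothing internal to compare against. Your blind argument is correct and is essentially the standard (and, as far as the cyclic scheme (i)$\Rightarrow$(ii)$\Rightarrow$(iii)$\Rightarrow$(i) goes, the expected) proof: density of $BUC$ plus translation invariance for (i)$\Rightarrow$(ii), Minkowski's integral inequality in $L^p_{ul}$ with a Gaussian tail split for (ii)$\Rightarrow$(iii), and the smoothing estimate $S(t)\colon L^p_{ul}\to BUC$ for (iii)$\Rightarrow$(i). The only detail to tighten is the covering in the last step: for $N\ge 4$ the unit balls $B(z_k,1)$, $z_k\in\Z^N$, do not cover $\RN$ (a point of the cube can be at distance $\sqrt{N}/2\ge 1$ from every lattice point), so you should either refine the lattice or use balls of radius $\sqrt N$ together with the standard equivalence of the $L^p_{ul}$ norms defined with different ball radii; this costs only a dimensional constant and does not affect the argument.
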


\begin{proof}[{Proof of Lemma~{\rm \ref{S10L1}}}]
Let $x \in\mathbb{R}^N \setminus \{0\}$ and $t\in(0,\infty)$.
We show that $u(x,t)=u^*(x)$ is a solution in the sense of Definition~\ref{S1D1}.
Since $u^*(x)\in C^2(\RN\setminus\{0\})$, we see by Green's identity that, for $s\in (0,t)$,
\begin{equation}
\label{S10L1E1}
\begin{split}
&\int_{B(0,\e)^c}G_{t-s}(x-y)\Delta u^*(y) \, \di y\\
& =\int_{\partial B(0,\e)^c}\left(
G_{t-s}(x-y)\frac{\partial}{\partial\nu_y}u^*(y)-u^*(y)\frac{\partial}{\partial\nu_y}G_{t-s}(x-y)\right) \,\di \sigma(y)\\
&\qquad+\int_{B(0,\e)^c}\Delta_yG_{t-s}(x-y)u^*(y) \,\di y\\
&=:I_1+I_2.
\end{split}
\end{equation}
By Lemma~\ref{S3L3} we have
\begin{align}\label{S10L1E2}
|I_1|
&\le C\left|\left.\frac{\partial}{\partial\nu}u^*(y)\right|_{|y|=\e}\right|\e^{N-1}
+C\left|\left.u^*(y)\right|_{|y|=\e}\right|\e^{N-1}\nonumber\\
&\le C\e^{N-2q_f-2\delta}+C\e^{N+1-2q_f-2\delta}
\to 0\ \ \textrm{as}\ \ \e\to 0^+,
\end{align}
since $N-2q_f-2\delta>0$ and $N+1-2q_f-2\delta>0$ for small $\delta>0$.
Integrating (\ref{S10L1E1}) with respect to $s$ over $(0,t-\delta)$ , we have
\begin{equation*}
\begin{split}
&\int_0^{t-\delta}\int_{B(0,\e)^c}G_{t-s}(x-y)\Delta u^*(y) \, \di y\di s\\
&\qquad=\int_0^{t-\delta}I_1\,\di s-\int_0^{t-\delta}\int_{B(0,\e)^c}\partial_sG_{t-s}(x-y)u^*(y)\,\di y\di s\\
&\qquad=\int_0^{t-\delta}I_1\,\di s-\int_{B(0,\e)^c}\int_0^{t-\delta}\partial_sG_{t-s}(x-y)u^*(y)\,\di s\di y\\
&\qquad=\int_0^{t-\delta}I_1\,\di s-\int_{B(0,\e)^c}\left(G_{\delta}(x-y)u^*(y)-G_t(x-y)u^*(y)\right)\,\di y,
\end{split}
\end{equation*}
where we used $\Delta_yG_{t-s}(x-y)=-\partial_sG_{t-s}(x-y)$.
Letting $\e\to 0^+$, by the dominated convergence theorem and (\ref{S10L1E2}) we have
\begin{equation}\label{S10L1E3}
\lim_{\e\to 0^+}\int_0^{t-\delta}\int_{B(0,\e)^c}G_{t-s}(x-y)\Delta u^*(y) \, \di y\di s
=-S(\delta)u^*+S(t)u^*.
\end{equation}
By the monotone convergence theorem we have
\begin{equation}\label{S10L1E4}
\begin{split}
&\lim_{\e\to 0^+}\int_0^{t-\delta}\int_{B(0,\e)^c}G_{t-s}(x-y)f(u^*(y))\,\di y\di s\\
&\qquad=\lim_{\e\to 0^+}\int_0^{t-\delta}\int_{\RN}G_{t-s}(x-y)f(u^*(y))\chi_{B(0,\e)^c}(y)\,\di y\di s\\
&\qquad=\int_0^{t-\delta}\int_{\RN}G_{t-s}(x-y)f(u^*(y))\,\di y\di s.
\end{split}
\end{equation}
The solution $u^*$ satisfies $\Delta u^*+f(u^*)=0$ in the classical sense for $x\in B(0,\e)^c$.
Hence,
\begin{equation*}
\begin{split}
&\int_0^{t-\delta}\int_{B(0,\e)^c}G_{t-s}(x-y)\Delta u(y)\,\di y\di s\\
&\qquad\qquad+\int_0^{t-\delta}\int_{B(0,\e)^c}G_{t-s}(x-y)f(u^*(y))\,\di y\di s=0.
\end{split}
\end{equation*}
Letting $\e\to 0^+$, by (\ref{S10L1E3}) and (\ref{S10L1E4}) we have
\begin{equation}\label{S10L1E5}
-S(\delta)u^*+S(t)u^*+\int_0^{t-\delta}S(t-s)f(u^*) \, \di s=0.
\end{equation}
Since $u^*\in\calL^\gamma_{\ul}(\RN)$ (by Proposition~\ref{S1P1}), by Proposition~\ref{S4P2} with $p=\gamma$ we have
\begin{equation}\label{S10L1E6}
\lim_{\delta\to 0^+}\left\|S(\delta)u^*-u^*\right\|_{L^{\gamma}_{\ul}(\RN)}=0.
\end{equation}
By the monotone convergence theorem we have
\begin{equation}\label{S10L1E7}
\begin{split}
\lim_{\delta\to 0^+}\int_0^{t-\delta}S(t-s)f(u^*)\,\di s
&=\lim_{\delta\to 0^+}\int_0^t\left(S(t-s)f(u^*)\right)\chi_{(0,t-\delta)}(s)\, \di s\\
&=\int_0^tS(t-s)f(u^*)\,\di s.
\end{split}
\end{equation}
By \eqref{S10L1E7}, \eqref{S10L1E6}, and \eqref{S10L1E5} we see that as $\delta\to 0^+$,
\[
u^*=S(t)u^*+\int_0^tS(t-s)f(u^*)\,\di s
\]
for all $(x,t)\in(\R^N\setminus\{0\})\times (0,\infty)$.
Thus, $u^*$ is a solution in the sense of Definition~\ref{S1D1}.
The proof is complete.
\end{proof}

\section{Transformations of self-similar solutions}
In order to treat two canonical nonlinearities in a unified way we define
$$
f_q(U):=\begin{cases}
U^p & \text{if}\ q>1,\\
e^U & \text{if}\ q=1,
\end{cases}
\qquad
F_q(U):=\int_U^{\infty}\frac{\di s}{f_q(s)}=
\begin{cases}
\displaystyle{\frac{1}{p-1}U^{-p+1}} & \text{if}\ q>1,\\
e^{-U} & \text{if}\ q=1,
\end{cases}
$$
where $p:=q/(q-1)$.
We consider the canonical equation
\begin{equation}\label{S4E1}
\partial_tU-\Delta U=f_q(U).
\end{equation}
For $\lambda>0$, $U_{\lambda}(x,t)$ is defined by the relation
\begin{equation}\label{S4E1+}
F_q[U_{\lambda}(x,t)]=\lambda^{-2}F_q[U(\lambda x,\lambda^2t)].
\end{equation}
If $U(x,t)$ satisfies \eqref{S4E1}, then $U_{\lambda}$ also satisfies \eqref{S4E1}.
We can easily check that
$$
U_{\lambda}(x,t)=\begin{cases}
\lambda^{\frac{2}{p-1}}U(\lambda x,\lambda^2t) & \text{if}\ q>1,\\
U(\lambda x,\lambda^2t)-2\log\lambda & \text{if}\ q=1.
\end{cases}
$$
To derive a forward self-similar solution we put $\lambda=1/\sqrt{t}$ into \eqref{S4E1+}.
We redefine $U$ by
\begin{equation}\label{S4E4}
F_q[U(x,t)]=tF_q\left[\varphi(\xi)\right],\quad \xi:=\frac{x}{\sqrt{t}}.
\end{equation}
Since $U$ satisfies \eqref{S4E1}, $\varphi(\xi)$ satisfies
\begin{equation}\label{S4E2}
\Delta\varphi+\frac{1}{2}\xi\cdot\nabla\varphi+f_q(\varphi)F_q(\varphi)+f_q(\varphi)=0.
\end{equation}
Specifically, $\varphi$ satisfies
\begin{equation}\label{S4E3}
\begin{cases}
\displaystyle{\Delta\varphi+\frac{1}{2}\xi\cdot\nabla\varphi+\frac{\varphi}{p-1}+\varphi^p=0} & \text{if}\ q>1,\\
\displaystyle{\Delta\varphi+\frac{1}{2}\xi\cdot\nabla\varphi+1+e^{\varphi}=0} & \text{if}\ q=1.
\end{cases}
\end{equation}
In summary, if $\varphi$ is a solution of equation~\eqref{S4E2}, then $U$ is a self-similar solution of equation~\eqref{S4E1}.

Self-similar solutions are important in the study of equation~\eqref{S4E1}, {\it e.g.}, existence of global-in-time solutions, asymptotic behaviors and nonuniqueness.
Then, various properties of solutions of equation~\eqref{S4E3} have been studied in numerous papers.
We recall known results about the intersection number of radial regular and singular solutions of equation~\eqref{S4E3}.
Here, we state known results in a unified way, using \eqref{S4E2}.
Equation~\eqref{S4E1} has an explicit singular solution
$$
U^*(x)=F_q^{-1}\left[\frac{|x|^2}{2N-4q}\right]=\begin{cases}
L|x|^{-\frac{2}{p-1}} & \text{if}\ q>1,\\
-2\log|x|+\log 2(N-2) & \text{if}\ q=1,
\end{cases}
$$
where $L$ is defined by \eqref{L} and $F_q^{-1}$ denotes the inverse function of $F_q$, {\it i.e.,}
$$
F_q^{-1}(u)=
\begin{cases}
\displaystyle{\frac{1}{\{(p-1)u\}^{1/(p-1)}}} & \text{if}\ q>1,\\
-\log u & \text{if}\ q=1.
\end{cases}
$$
The function
$$
\varphi^*(\xi):=F_q^{-1}\left[\frac{|\xi|^2}{2N-4q}\right],
$$
which is the same function as $U^*$, is a solution of equation~\eqref{S4E2}.
Let $\eta:=|\xi|$, and let $\varphi(\eta,\alpha)$ denote the solution of
$$
\begin{cases}
\displaystyle{\varphi''+\frac{N-1}{\eta}\varphi'+\frac{1}{2}\eta\varphi'+f_q(\varphi)F_q(\varphi)+f_q(\varphi)=0}, & \text{for}\ \eta>0,\\
\varphi(0,\alpha)=\alpha>0,\ \varphi_{\eta}(0,\alpha)=0.
\end{cases}
$$
\begin{proposition}\label{S4P1}
Assume that $1\le q<q_S$.
Then $\varphi(\eta,\alpha)$ is defined for $\eta\ge 0$ and $\varphi_{\eta}(\eta,\alpha)<0$ for $\eta>0$.
There exists $\alpha_0>0$ such that $\varphi(\eta,\alpha_0)$ and $\varphi^*(\eta)$ has the first intersection point $\eta_0$.
Specifically, there exists $\delta>0$ such that
$$
\begin{cases}
\varphi(\eta,\alpha_0)<\varphi^*(\eta)& \text{for}\ 0<\eta<\eta_0,\\
\varphi(\eta_0,\alpha_0)=\varphi^*(\eta_0)\ \text{and}\ \varphi_{\eta}(\eta_0,\alpha_0)>\varphi^{*\prime}(\eta_0),\\
\varphi(\eta,\alpha_0)>\varphi^*(\eta)& \text{for}\ \eta_0<\eta<\eta_0+\delta.
\end{cases}
$$
\end{proposition}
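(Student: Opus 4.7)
The plan is to split the proof into three stages: global existence and strict monotonicity of $\varphi(\eta,\alpha)$; existence of a critical shooting parameter $\alpha_0$ yielding an intersection with $\varphi^*$; and transversality at that crossing. For \textbf{Step~1}, a local $C^2$ solution exists by standard ODE theory. Expanding the ODE at $\eta=0$ (using $\varphi(0)=\alpha$ and $\varphi_\eta(0)=0$) gives
\[
\varphi_{\eta\eta}(0,\alpha)=-\frac{f_q(\alpha)[F_q(\alpha)+1]}{N}<0,
\]
so $\varphi_\eta<0$ on a right neighborhood of $0$. If $\eta_1>0$ were the smallest zero of $\varphi_\eta$ (with $\varphi(\eta_1)>0$ by continuity), the ODE at $\eta_1$ would give $\varphi_{\eta\eta}(\eta_1)=-f_q(\varphi(\eta_1))[F_q(\varphi(\eta_1))+1]<0$, contradicting $\eta_1$ being a local minimum. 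Hence $\varphi_\eta<0$ as long as $\varphi>0$, so $0<\varphi(\eta,\alpha)\le\alpha$, which rules out blow-up and extends $\varphi(\cdot,\alpha)$ to all of $[0,\infty)$.

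For \textbf{Step~2}, I would set $A:=\{\alpha>0:\varphi(\eta,\alpha)<\varphi^*(\eta)\text{ for every }\eta>0\}$ and define $\alpha_0:=\sup A$. Nonemptiness of $A$ for $\alpha$ small follows from the bound $\varphi\le\alpha$ combined with $\varphi^*(\eta)\to\infty$ as $\eta\to 0^+$ and a decay-rate comparison as $\eta\to\infty$. The crucial step is boundedness of $A$: I would apply the scaling $\tilde\eta=\alpha^{(p-1)/2}\eta$, $\tilde\varphi(\tilde\eta)=\alpha^{-1}\varphi(\eta,\alpha)$ in the power case ($q>1$, $p:=q/(q-1)$), and the shift-rescale $\tilde\varphi=\varphi-\alpha$, $\tilde\eta=e^{\alpha/2}\eta$ in the exponential case ($q=1$). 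Under this transformation, the self-similar terms in \eqref{S4E2} acquire a small factor ($\alpha^{1-p}$ or $e^{-\alpha}$) and vanish as $\alpha\to\infty$, so $\tilde\varphi(\cdot,\alpha)$ converges on compact $\tilde\eta$-intervals to the regular Emden solution $\tilde u$ of $\tilde u''+\frac{N-1}{\tilde\eta}\tilde u'+f_q(\tilde u)=0$; the rescaled singular solution $\tilde\varphi^*$ coincides with $F_q^{-1}[\tilde\eta^2/(2N-4q)]$. Since $q<q_S$, the stationary regular solution crosses the stationary singular one by the supercritical intersection theory underlying Proposition~\ref{S2P1}, and continuous dependence propagates this crossing back to $\varphi(\cdot,\alpha)$ and $\varphi^*$ for $\alpha$ large, giving $\alpha\notin A$. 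At $\alpha_0=\sup A$, continuity in $\alpha$ yields $\varphi(\eta,\alpha_0)\le\varphi^*(\eta)$ on $(0,\infty)$; if strict inequality held everywhere, a small upward perturbation of $\alpha_0$ would remain in $A$, contradicting maximality, so a first point of equality $\eta_0>0$ exists.

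For \textbf{Step~3}, since $\varphi(\cdot,\alpha_0)<\varphi^*$ on $(0,\eta_0)$ with equality at $\eta_0$, necessarily $\varphi_\eta(\eta_0,\alpha_0)\ge\varphi^{*\prime}(\eta_0)$. If equality held, $\varphi(\cdot,\alpha_0)$ and $\varphi^*$ would share Cauchy data at the regular point $\eta_0>0$ of \eqref{S4E2}, forcing $\varphi\equiv\varphi^*$ locally by uniqueness---contradicting $\varphi(0,\alpha_0)=\alpha_0<\infty$ while $\varphi^*(\eta)\to\infty$ as $\eta\to 0^+$. Therefore $\varphi_\eta(\eta_0,\alpha_0)>\varphi^{*\prime}(\eta_0)$, and by continuity $\varphi(\eta,\alpha_0)>\varphi^*(\eta)$ on a right neighborhood of $\eta_0$.

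The main obstacle is Step~2: producing some $\alpha$ for which $\varphi(\cdot,\alpha)$ rises above $\varphi^*$ at some point, and confirming that $\alpha_0=\sup A$ gives a \emph{genuine first crossing} in finite $\eta$ rather than merely an asymptotic touching at infinity. The scaling reduction to the Emden problem, together with the supercritical stationary intersection, is the essential ingredient, but quantitative control of the convergence on bounded $\tilde\eta$-ranges and of the tails as $\tilde\eta\to\infty$ is where most of the work lives.
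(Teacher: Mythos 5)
The paper does not prove this proposition at all: it cites \cite[Theorem 1.4]{SW03} for $1<q<q_S$ and \cite[Proposition 4.2]{F18} for $q=1$, so you are attempting something more ambitious than the authors. Your Step 3 (transversality via backward uniqueness at the regular point $\eta_0$) is correct, and the rescaling idea in Step 2 is the right one in spirit. But there is a genuine gap at the pivotal point of Step 2: the claim that \emph{``since $q<q_S$, the stationary regular solution crosses the stationary singular one by the supercritical intersection theory underlying Proposition~\ref{S2P1}''} is not available. Proposition~\ref{S2P1} only gives $u(r,\alpha)\to u^*(r)$ in $C^2_{loc}$; it contains no intersection statement, and convergence alone does not produce a crossing. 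The actual fact you need is the Joseph--Lundgren dichotomy: the regular Emden (resp.\ Gelfand) solution crosses the singular one if and only if $p_S<p<p_{JL}$ (resp.\ $2<N<10$), i.e.\ if and only if $q>q_{JL}$. For $N>10$ and $1\le q\le q_{JL}$ the limit profiles stay strictly below the singular solution, your rescaling produces no crossing, and the argument collapses. (This is also where the proposition's stated range $1\le q<q_S$ is looser than what the cited sources cover; the paper only ever applies it with $q>q_{JL}$, via (A5) and the choice of $\e$. At minimum you must import the Joseph--Lundgren intersection theorem as an explicit external ingredient and restrict to $q>q_{JL}$.)

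Two further points. First, your $\sup A$ machinery is both leaky and unnecessary: ``strict inequality everywhere implies a small upward perturbation stays in $A$'' fails because $(0,\infty)$ is noncompact and continuous dependence controls $\varphi(\cdot,\alpha)$ only on compact $\eta$-sets --- you acknowledge the missing tail control yourself, and closing it is where the real work of \cite{SW03,N06} lies (the asymptotics $\varphi(\eta,\alpha)\sim\ell(\alpha)\eta^{-2/(p-1)}$). But none of this is needed: if for a \emph{single} $\alpha_0$ there exists some $\eta$ with $\varphi(\eta,\alpha_0)\ge\varphi^*(\eta)$, then the infimum $\eta_0$ of the (closed, bounded away from $0$) touching set is already a first intersection point, and your Step 3 upgrades it to a transversal crossing. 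So Step 2 should reduce to exhibiting one such $\alpha_0$, which your rescaling does give for $q>q_{JL}$. Second, in Step 1 the deduction ``$\varphi_\eta<0$ as long as $\varphi>0$, so $0<\varphi\le\alpha$'' does not establish positivity (a decreasing bounded function can reach $0$), and positivity is in fact false for $q=1$, where $\varphi$ must eventually become negative; positivity in the power case is the Haraux--Weissler result and needs its own argument, while global existence for $q=1$ requires a separate lower bound on $\varphi_\eta$.
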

The proof can be found in \cite[Theorem 1.4]{SW03} for $1<q<q_S$ and in \cite[Proposition~4.2]{F18} for $q=1$.
The detailed solution structure of the case $1<q<q_S$ can be found in \cite{N06}.
\\

Hereafter, we consider problem~\eqref{S1E1}, and let $q_f$ be the exponent defined in (A3).
If $q_f=1$, then we define $q:=1$.
If $q_f>1$, then we define $q:=q_f+\e$, where $\e>0$ is determined later.
We define $\bu(x,t)$ by
\begin{equation}\label{S4E4+}
\bu(x,t):=F^{-1}[F_q[U(x,t)]]=F^{-1}\left[tF_q\left[\varphi\left(\frac{|x|}{\sqrt{t}}\right)\right]\right],
\end{equation}
where $U$ is a self-similar solution of equation~\eqref{S4E1} defined by \eqref{S4E4}, $\varphi(\eta):=\varphi(\eta,\alpha_0)$ and $\alpha_0$ is given in Proposition~\ref{S4P1}.
It is obvious from the definition that $\|\overline{u}(\cdot, t)\|_{L^\infty(\mathbb{R}^N)} < \infty$ for every $t>0$.
Since
\begin{equation}\label{S4E5}
F[\bu(x,t)]=F_q[U(x,t)],
\end{equation}
$\bu$ is a transformation of a self-similar solution $U$ of \eqref{S4E1}.
Since $U$ satisfies \eqref{S4E1}, by \eqref{S4E5} we see that $\bu$ is a solution of the equation
\begin{equation}\label{S4E5+}
\partial_t\bu-\Delta \bu=f(\bu)+\frac{q-f'(\bu)F(\bu)}{f(\bu)F(\bu)}|\nabla \bu|^2.
\end{equation}
The transformation \eqref{S4E5} and the equation \eqref{S4E5+} were found in Fujishima-Ioku~\cite{FI18}.
If $q_f=1$, then by (A6) we integrate $1/f(\bu)\le f''(\bu)/f'(\bu)^2$ over $[\bu,\infty)$.
Then, $F(\bu)\le /f'(\bu)$ for large $\bu>0$.
Hence, $f'(\bu)F(\bu)\le q(=1)$ for large $\bu>0$.
If $q_f>1$, then by Lemma~\ref{S3L2} we see that $f'(\bu)F(\bu)\to q_f$ $(\bu\to\infty)$, and hence $f'(\bu)F(\bu)\le q(=q_f+\e)$ for large $\bu>0$.
In both cases $q-f'(\bu)F(\bu)\ge 0$ for large $\bu>0$, and hence by \eqref{S4E5+} we have
\begin{equation}\label{S4E5++}
\partial_t\bu\ge \Delta\bu+f(\bu)
\end{equation}
for large $\bu>0$,
Thus, $\bu$ is a supersolution of problem~\eqref{S1E1} in the classical sense if $\bu$ is large.

We study an intersection point of $u^*(x)$, which is a positive singular stationary solution of \eqref{S1E1} given by Proposition~\ref{S1P1}, and $\bu(x,t)$.
\begin{lemma}\label{S4L1}
There exists a small $t_0>0$ such that $\bu(r,t)$ and $u^*(r)$ have the first intersection point $r=r(t)\in C^1(0,t_0)$ such that $r(t)\to 0$ as $t\to 0^+$.
Specifically, there exists a positive continuous function $\delta(t)\in C(0,t_0)$ such that
\begin{equation}\label{S4L1E1}
\begin{cases}
\bu(r,t)<u^*(r) & \text{for}\ 0<r<r(t),\\
\bu(r(t),t)=u^*(r(t))\ \text{and}\ \bu_r(r(t),t)>u^{*\prime}(r(t)),\\
\bu(r,t)>u^*(r) & \text{for}\ r(t)<r<r(t)+\delta(t).
\end{cases}
\end{equation}
\end{lemma}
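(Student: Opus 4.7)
The plan is to reformulate the intersection equation $\bar u(r,t)=u^*(r)$ in terms of the self-similar variable $\eta=r/\sqrt{t}$ and apply the implicit function theorem. Since $F$ is strictly decreasing, $\bar u(r,t)=u^*(r)$ is equivalent to $F[\bar u(r,t)]=F[u^*(r)]$. Combining $F[\bar u(r,t)]=tF_q[\varphi(r/\sqrt{t})]$ from \eqref{S4E5} with the expansion $F[u^*(r)]=\dfrac{r^{2}}{2N-4q_f}(1+\theta(r))$ from Proposition~\ref{S2P1} and writing $r=\eta\sqrt{t}$, intersections correspond exactly to zeros of
\[
G(\eta,t):=F_q[\varphi(\eta)]-\frac{\eta^{2}}{2N-4q_f}\bigl(1+\theta(\eta\sqrt{t})\bigr).
\]

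I would then analyze $G(\cdot,0)$ via $F_q[\varphi^*(\eta)]=\eta^{2}/(2N-4q)$ and Proposition~\ref{S4P1}. On $(0,\eta_0)$, $\varphi<\varphi^*$ yields $F_q[\varphi]>F_q[\varphi^*]\ge \eta^{2}/(2N-4q_f)$, so $G(\cdot,0)>0$ there. At $\eta_0$,
\[
G(\eta_0,0)=\eta_0^{2}\Bigl[\tfrac{1}{2N-4q}-\tfrac{1}{2N-4q_f}\Bigr],
\]
and, writing $g:=\varphi_\eta(\eta_0,\alpha_0)-\varphi^{*\prime}(\eta_0)>0$ and using $\varphi^{*\prime}(\eta_0)=-2\eta_0 f_q(\varphi^*(\eta_0))/(2N-4q)$, a direct computation gives
\[
\partial_\eta G(\eta_0,0)=2\eta_0\Bigl[\tfrac{1}{2N-4q}-\tfrac{1}{2N-4q_f}\Bigr]-\frac{g}{f_q(\varphi^*(\eta_0))}.
\]
If $q_f=1$ (so $q=q_f$), both brackets vanish: $G(\eta_0,0)=0$, $\partial_\eta G(\eta_0,0)=-g/f_q(\varphi^*(\eta_0))<0$, and $\eta^*:=\eta_0$ is the first zero of $G(\cdot,0)$ with transversal crossing. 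If $q_f>1$, so $q=q_f+\varepsilon$, then both brackets are positive and $O(\varepsilon)$; fixing $\varepsilon>0$ small enough so that the bracket in $\partial_\eta G$ is less than $g/(2f_q(\varphi^*(\eta_0)))$ yields $\partial_\eta G(\eta_0,0)<-g/(2f_q(\varphi^*(\eta_0)))<0$. Then $G(\cdot,0)$ strictly decreases from its small positive value $G(\eta_0,0)$ and, by the intermediate value theorem, crosses $0$ at a unique $\eta^*=\eta_0+O(\varepsilon)$ with $\partial_\eta G(\eta^*,0)<0$; combined with $G(\cdot,0)>0$ on $(0,\eta_0)$, this $\eta^*$ is the first zero.

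For $t>0$, $G$ is $C^1$ in $(\eta,t)$ (since $\varphi\in C^2$ and $\theta\in C^1(0,r_1)$), so the implicit function theorem produces a $C^1$ curve $\eta=\eta(t)\in C^1(0,t_0)$ solving $G(\eta(t),t)=0$ with $\eta(t)\to\eta^*$ as $t\to 0^+$ (by continuity of $G$ at $t=0$ and isolation of $\eta^*$). Setting $r(t):=\eta(t)\sqrt{t}$ gives $r(t)\in C^1(0,t_0)$ with $r(t)\to 0$. The signs in \eqref{S4L1E1} follow from the identity $F[u^*(r)]-F[\bar u(r,t)]=-tG(\eta,t)$ and the monotonicity of $F$: $G>0$ on $\{\eta<\eta(t)\}$ gives $\bar u<u^*$ on $(0,r(t))$, $G<0$ just past $\eta(t)$ gives $\bar u>u^*$ on $(r(t),r(t)+\delta(t))$, and $\bar u_r(r(t),t)>u^{*\prime}(r(t))$ follows by differentiating $F[u^*]-F[\bar u]$ in $r$ at $r(t)$ (where $u^*=\bar u$) and using $\partial_\eta G(\eta(t),t)<0$. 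The principal obstacle is the case $q_f>1$: here $G(\eta_0,0)\neq 0$, so the first zero $\eta^*$ lies slightly past $\eta_0$, and one must fix $\varepsilon=q-q_f>0$ small enough to retain the strict negativity of $\partial_\eta G$ near $\eta_0$ coming from the gap $g>0$. A secondary technical point is that $\theta\in C^1(0,r_1)$ only with $r\theta'(r)\to 0$, so $G$ may fail to be $C^1$ up to $t=0$; this is handled by applying the implicit function theorem strictly for $t>0$ and recovering the limit $\eta(t)\to\eta^*$ at $t=0^+$ by continuity.
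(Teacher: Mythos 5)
Your proof is correct and follows essentially the same route as the paper: pass to the self-similar variable $\eta=r/\sqrt t$, use Proposition~\ref{S4P1} for the unperturbed transversal intersection, treat $\theta(\sqrt t\,\eta)$ as a $C^1_{loc}$-small perturbation, and apply the implicit function theorem; your explicit computation of $\partial_\eta G(\eta_0,0)$ just makes quantitative what the paper handles softly via continuity of $\varphi$ and $\varphi^*$ in the parameter $q$. The only point to make explicit in the case $q_f>1$ is that $\eta_0$, $g$ and $\varphi^*$ themselves depend on $q=q_f+\e$, so choosing $\e$ small enough that the $O(\e)$ bracket is dominated by $g/(2f_q(\varphi^*(\eta_0)))$ tacitly uses that these quantities converge to their positive $q=q_f$ limits as $\e\to 0^+$ --- which is exactly the continuity-in-$q$ statement the paper invokes.
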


\begin{proof}
First, we prove the assertion, assuming that $q=q_f$ ($\e=0$).
In order to find an intersection point we consider the equation $\bu=u^*$, namely
\begin{equation}\label{S4L1E1+}
F^{-1}\left[tF_q\left[\varphi\left(\frac{r}{\sqrt{t}},\alpha_0\right)\right]\right]=
F^{-1}\left[\frac{r^2}{2N-4q_f}(1+\theta(r))\right].
\end{equation}
Here, by Proposition~\ref{S2P1} we see that as $r\to 0^+$,
\begin{equation}\label{S4L1E2}
\theta(r)\to 0\quad\text{and}\quad r\theta'(r)\to 0.
\end{equation}
Let $\eta:=r/\sqrt{t}$. Then, by \eqref{S4L1E1+} we have
$$
F_q[\varphi(\eta,\alpha_0)]=\frac{\eta^2}{2N-4q_f}(1+\theta(r)).
$$
Therefore,
\begin{equation}\label{S4L1E3}
\varphi(\eta,\alpha_0)=F_q^{-1}\left[\frac{\eta^2+\eta^2\theta(r)}{2N-4q_f}\right].
\end{equation}
Putting $r=\sqrt{t}\eta$ into \eqref{S4L1E2}, we have
\begin{equation}\label{S4L1E4}
\theta(\sqrt{t}\eta)\to 0\quad\text{in}\ C_{\eta,loc}(0,\infty)\quad\text{as}\ t\to 0^+.
\end{equation}
Because of Proposition~\ref{S4P1}, by \eqref{S4L1E4} we see that \eqref{S4L1E3} has the first intersection $\eta_*(t)$ near $\eta_0$ for small $t>0$ such that an intersection point does not exist on $0<\eta<\eta_*(t)$.
We define $r(t):=\sqrt{t}\eta_*(t)$.
By the uniqueness of the solution of equation~\eqref{S4E2} we see that
\begin{equation}\label{S4L1E5}
\varphi'(\eta_0,\alpha_0)\neq\frac{d}{d\eta}\left.F_q^{-1}\left[\frac{\eta^2}{2N-4q_f}\right]\right|_{\eta=\eta_0}.
\end{equation}
Using \eqref{S4L1E2}, we have
\begin{align}
\frac{d}{d\eta}\left(\eta^2\theta(\sqrt{t}\eta)\right)
&=2\eta\theta(\sqrt{t}\eta)+\eta(\sqrt{t}\eta)\theta'(\sqrt{t}\eta)\nonumber\\
&\to 0\quad\text{in}\ C_{\eta,loc}(0,\infty)\quad\text{as}\ t\to 0^+.\label{S4L1E6}
\end{align}
It follows from \eqref{S4L1E4} and \eqref{S4L1E6} that the LHS of \eqref{S4L1E3} converges to $\varphi^*(\eta)$ in $C^1_{loc}(0,\infty)$ as $t\to 0^+$.
By \eqref{S4L1E5} we have
$$
\varphi'(\eta_*(t),\alpha_0)
\neq
\frac{d}{d\eta}\left. F_q^{-1}\left[\frac{\eta^2}{2N-4q_f}(1+\theta(\sqrt{t}\eta))\right]\right|_{\eta=\eta_*(t)}
$$
for small $t>0$.
Applying the implicit function theorem to the function in $(\eta,t)$ defined by
$$
\varphi(\eta,\alpha_0)-F_q^{-1}\left[\frac{\eta^2}{2N-4q_f}(1+\theta(\sqrt{t}\eta))\right]=0,
$$
we see that $\eta_*(t)$, which is a function in $t$, is of class $C^1$.
Thus, $r(t)=\sqrt{t}\eta_*(t)$ is of class $C^1$.
Since $\eta_*(t)\to \eta_0$ as $t\to 0^+$, we see that $r(t)\to 0$ as $t\to 0^+$.

Then, it is obvious that there exists $t_0>0$ such that \eqref{S4L1E1+} has the first intersection point $r(t)$ satisfying \eqref{S4L1E1} for $0<t<t_0$.
The proof for the case $q_f=1$ is complete, since $q=q_f$.

Second, we consider the case $q_f>1$.
Let us recall that $q=q_f+\e$.
Let $X:=\{q\in\R;\ q\ge 1\}$.
For each compact interval $I\subset (0,\infty)$, the mappings $q\mapsto \varphi(\eta,\alpha_0)$ and $q\mapsto F^{-1}_q[\eta^2/(2N-4q_f)]$ are in $C(X,C^1_{\eta}(I))$.
Since \eqref{S4L1E5} with $q=q_f$ holds, we can choose a small $\e>0$ such that $q_f+\e<q_S$ and
$$
\varphi(\eta,\alpha_0)=F_q^{-1}\left[\frac{\eta^2}{2N-4q_f}\right]
$$
with $q=q_f+\e$ has the first intersection for small $\e>0$, which can be transversal, denoted by $\tilde{\eta}_0$.
Because of \eqref{S4L1E4} and \eqref{S4L1E6},
$$
\varphi(\eta,\alpha_0)=F_q^{-1}\left[\frac{\eta^2+\eta^2\theta(\sqrt{t}\eta)}{2N-4q_f}\right]
$$
with $q=q_f+\e$ also has the first intersection point for small $t>0$, which can also be transversal, denoted by $\tilde{\eta}_*(t)$.
Note that $\theta$ depends on $q_f$ but it does not depend on $q$.
We see that $\tilde{\eta}_*(t)\to\tilde{\eta}_0$ as $t\to 0$.
Let $r(t):=\sqrt{t}\tilde{\eta}_*(t)$.
Since the first intersection point $\tilde{\eta}_*(t)$ is transversal, in a similar way to before we see that there exists $t_0>0$ such that \eqref{S4L1E1+} has the first intersection point $r(t)$ satisfying \eqref{S4L1E1} for $0<t<t_0$.
The proof for the case $q_f>1$ is complete.
\end{proof}

Now, we define a function $v$ by
\begin{equation}\label{S4E7}
v(x,0)=u^*(|x|),\quad
v(x,t):=\begin{cases}
\bu(|x|,t) & \text{if}\ 0\le |x|<r(t)\ \text{and}\ 0<t<t_0,\\
u^*(|x|) & \text{if}\ |x|\ge r(t)\ \text{and}\ 0<t<t_0,
\end{cases}
\end{equation}
where $r(t)$ and $t_0$ are given in Lemma~\ref{S4L1}.
By the same argument as in the proof of Proposition~\ref{S1P1},
we see that for every $0\le t <t_0$, $v(t) \in \mathcal{L}_{ul}^\gamma(\mathbb{R}^N)$ for $1\le \gamma <\gamma^*$, where $\gamma_*$ is as in Proposition~\ref{S1P1}.
Since $r(t)\to 0$ as $t\to 0^+$, we see that $u^*(r(t))\to\infty$ as $t\to 0^+$.
Then, $\bu(x,t)$ is large in $\{x\in\RN;\ |x|\le r(t)\}$ for small $t>0$, and hence $\bu$ becomes a supersolution in the classical sense near the origin for small $t>0$ as mentioned after \eqref{S4E5++}.

We use the following proposition in this section.
\begin{proposition}[{\cite[Corollary~3.1]{MT06}}]\label{S4P3}
Let $1\le p\le q\le\infty$.
Then there exists $C_1=C_1(N,p,q)>0$ such that
$$
\left\|S(t)u\right\|_{L^q_{ul}(\RN)}\le C_1\left(t^{-\frac{N}{2}\left(\frac{1}{p}-\frac{1}{q}\right)}+1\right)
\left\|u\right\|_{L^p_{ul}(\RN)}
$$
for $t>0$ and $u\in L^p_{ul}(\RN)$.
\end{proposition}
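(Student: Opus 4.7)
The plan is to establish the bound in three steps, separating the easy "no-blow-up" regime $p=q$, the extremal case $q=\infty$ which carries the full singular rate, and then reaching the general case by interpolating on the target index.

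First I would prove the case $p=q$ by an application of Minkowski's integral inequality. For any $z\in\R^N$, writing $[S(t)u](x)=\int G_t(y)u(x-y)\,\di y$, Minkowski gives
\[
\|S(t)u\|_{L^p(B(z,1))}\le\int_{\R^N}G_t(y)\,\|u(\,\cdot\,-y)\|_{L^p(B(z,1))}\,\di y=\int_{\R^N}G_t(y)\,\|u\|_{L^p(B(z-y,1))}\,\di y,
\]
and bounding each inner factor by $\|u\|_{L^p_{ul}}$ yields $\|S(t)u\|_{L^p_{ul}}\le\|u\|_{L^p_{ul}}$ (the factor $t^{-\alpha}$ is absent because $\alpha=0$). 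Second, I would handle the endpoint $q=\infty$. Cover $\R^N$ by unit balls $B(z_k,1)$ centered at lattice points with bounded multiplicity; for $x\in B(z,1)$, Hölder gives
\[
|[S(t)u](x)|\le\sum_k\|G_t(x-\,\cdot\,)\|_{L^{p'}(B(z_k,1))}\,\|u\|_{L^p(B(z_k,1))}\le\|u\|_{L^p_{ul}}\sum_k\|G_t(x-\,\cdot\,)\|_{L^{p'}(B(z_k,1))}.
\]
For $|z_k-x|\le 2$ (finitely many terms) each factor is bounded by the standard $\|G_t\|_{L^{p'}(\R^N)}=C t^{-\frac{N}{2p}}$. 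For $|z_k-x|\ge 2$, I would use the pointwise bound $G_t(x-y)\le C t^{-N/2}e^{-c|z_k-x|^2/t}$ on $B(z_k,1)$; summing the tail over the lattice and comparing with $\int_{\R^N}t^{-N/2}e^{-c|y|^2/t}\,\di y=C$ bounds the far-field contribution by a constant, uniformly in $t>0$. Combining the two regimes of $t$ yields $\|S(t)u\|_{L^\infty_{ul}}\le C(t^{-\frac{N}{2p}}+1)\|u\|_{L^p_{ul}}$.

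Third, I would interpolate: for any $p\le q\le\infty$ and every ball $B(z,1)$,
\[
\|S(t)u\|_{L^q(B(z,1))}\le\|S(t)u\|_{L^\infty(B(z,1))}^{1-p/q}\,\|S(t)u\|_{L^p(B(z,1))}^{p/q},
\]
which is elementary from $\int|f|^q\le\|f\|_\infty^{q-p}\int|f|^p$. Taking the supremum over $z$ and inserting the two endpoint bounds just proved gives
\[
\|S(t)u\|_{L^q_{ul}}\le\bigl[C(t^{-\frac{N}{2p}}+1)\bigr]^{1-p/q}\,C^{p/q}\,\|u\|_{L^p_{ul}}\le C_1\bigl(t^{-\frac{N}{2}(\frac{1}{p}-\frac{1}{q})}+1\bigr)\|u\|_{L^p_{ul}},
\]
using $(a+b)^\theta\le a^\theta+b^\theta$ for $\theta\in[0,1]$ and $(1-p/q)/p=1/p-1/q$.

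The main technical obstacle is the tail sum in the $q=\infty$ estimate: one must check that $\sum_k\|G_t(x-\,\cdot\,)\|_{L^{p'}(B(z_k,1))}$ stays bounded by $C(t^{-\frac{N}{2p}}+1)$ uniformly in $t>0$, which requires separating small and large $t$ and comparing the lattice sum of Gaussians with its integral analogue. Once this is done the rest of the argument is a clean application of Minkowski, Hölder, and log-convexity of $L^p$ norms.
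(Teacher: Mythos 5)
This proposition is imported by the paper from \cite[Corollary~3.1]{MT06} with no proof given, so there is nothing in the paper to compare against line by line; judged on its own, your argument is correct and complete, and it follows the standard route (which is also essentially that of the cited reference): the non-smoothing $p=q$ case by Minkowski's integral inequality, the $q=\infty$ endpoint by a unit-ball covering with a near/far splitting of the Gaussian kernel, and the general case by log-convexity of $L^q$ norms on each unit ball together with $(a+b)^\theta\le a^\theta+b^\theta$. The one point you rightly flag as needing care --- uniform boundedness in $t$ of the far-field lattice sum $\sum_k t^{-N/2}e^{-c|z_k-x|^2/t}$ over $|z_k-x|\ge 2$ --- does go through: for $t\le 1$ each term carries a factor $e^{-c'/t}$ that kills the $t^{-N/2}$, and for $t\ge 1$ the sum is comparable to $t^{-N/2}\int_{\RN}e^{-c|y|^2/t}\,\di y=C$. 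No gaps.
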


Then, the following lemma plays a crucial role in this paper
\begin{lemma}\label{S4L2}
Let $v$ be defined by \eqref{S4E7}.
Then, $v$ is a supersolution of problem~\eqref{S1E1} in $\mathbb{R}^N\times[0,t_0)$ in the sense of Definition~{\rm \ref{S1D1}}.
\end{lemma}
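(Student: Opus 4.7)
The strategy is to verify directly the integral supersolution inequality of Definition~\ref{S1D1}\,(ii) by applying Green's identity piecewise on the two regions $\{|y|<r(s)\}$ (where $v=\bar u$) and $\{|y|>r(s)\}$ (where $v=u^*$), and showing that the boundary contribution at the moving interface $|y|=r(s)$ has a favorable sign. By shrinking $t_0$ if necessary, we may assume $\bar u(y,s)\geq M$ for all $|y|\leq r(s)$ and $s\in(0,t_0)$, where $M$ is taken large enough so that $q-f'(\bar u)F(\bar u)\geq 0$; this is possible since $\bar u$ is radially non-increasing with $\bar u(r(s),s)=u^*(r(s))\to\infty$ as $s\to 0^+$. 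Under this condition \eqref{S4E5+} yields the pointwise classical supersolution inequality $\partial_s\bar u-\Delta\bar u\geq f(\bar u)$ on $\{|y|<r(s)\}$ (cf.\ \eqref{S4E5++}), while on $\{|y|>r(s)\}$ the stationary equation $-\Delta u^*=f(u^*)$ gives $\partial_s v-\Delta v=f(v)$.

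Fix $(x,t)\in\RN\times(0,t_0)$. For each $s\in(0,t)$, small $\epsilon>0$, and large $R>0$, partition $\{\epsilon<|y|<R\}$ into $D^-_s:=\{\epsilon<|y|<r(s)\}$ and $D^+_s:=\{r(s)<|y|<R\}$, and apply Green's identity to $(\bar u,G_{t-s}(x-\cdot))$ on $D^-_s$ and to $(u^*,G_{t-s}(x-\cdot))$ on $D^+_s$. The boundary contribution at $|y|=\epsilon$ vanishes as $\epsilon\to 0^+$ by exactly the estimates of Lemma~\ref{S3L3} used in \eqref{S10L1E2}, and the boundary at $|y|=R$ vanishes as $R\to\infty$ by the Gaussian decay of $G_{t-s}$ together with the boundedness of $u^*$ and $|u^{*\prime}|$ at infinity. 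The contributions at $|y|=r(s)$ from the two regions are summed using the fact that the outward normals point in opposite radial directions; since $\bar u(r(s),s)=u^*(r(s))$, the $\partial_r G_{t-s}$ terms cancel and only the gradient jump survives:
\[
J(s):=\oint_{|y|=r(s)}G_{t-s}(x-y)\bigl[\bar u_r(r(s),s)-u^{*\prime}(r(s))\bigr]\,\di\sigma(y)\geq 0,
\]
where nonnegativity is precisely the transversality conclusion of Lemma~\ref{S4L1}.

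Combining both identities with the pointwise inequality $\partial_s v-\Delta v_{ac}\geq f(v)$ (equality in $D^+_s$, inequality in $D^-_s$) and integrating by parts in $s$ over $[\delta,t-\delta']$ via $\partial_s G_{t-s}=-\Delta G_{t-s}$ telescopes the time integral and yields
\[
\int_{\RN}G_{\delta'}(x-y)v(y,t-\delta')\,\di y-\int_{\RN}G_{t-\delta}(x-y)v(y,\delta)\,\di y-\int_\delta^{t-\delta'}J(s)\,\di s\geq\int_\delta^{t-\delta'}\!\!\int_{\RN}G_{t-s}(x-y)f(v(y,s))\,\di y\di s.
\]
Sending $\delta\to 0^+$ (dominated convergence using $v(\cdot,0)=u^*$ and $v(\cdot,s)\leq u^*$), then $\delta'\to 0^+$ (using $v(\cdot,t)\in\calL^\gamma_{ul}(\RN)$, Proposition~\ref{S4P2}, and the continuity of $s\mapsto v(\cdot,s)$ which follows by dominated convergence since $v(\cdot,s)\to v(\cdot,t)$ pointwise a.e.\ with common dominant $u^*$), and finally discarding the nonnegative term $\int_0^t J(s)\,\di s$, gives the required Duhamel supersolution inequality
\[
v(x,t)\geq[S(t)u^*](x)+\int_0^t[S(t-s)f(v(\cdot,s))](x)\,\di s
\]
for a.e.\ $(x,t)\in\RN\times[0,t_0)$, which is the assertion in the sense of Definition~\ref{S1D1}\,(ii).

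The principal obstacle is the sign bookkeeping at the moving interface $|y|=r(s)$: one must confirm that the opposite orientations of the outward normals from $D^-_s$ and $D^+_s$ make the $\partial_r G_{t-s}$ contributions cancel while adding the two $G_{t-s}\partial_r$ contributions, so that the surviving gradient jump $\bar u_r(r(s),s)-u^{*\prime}(r(s))$ enters $J(s)$ with the favorable sign supplied by Lemma~\ref{S4L1}. The remaining limits ($\epsilon\to 0$ near the origin, $R\to\infty$ at infinity, and $\delta,\delta'\to 0^+$ in time) are straightforward adaptations of the Green's-identity calculation already carried out in the proof of Lemma~\ref{S10L1}.
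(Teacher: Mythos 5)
Your proposal is correct and follows essentially the same route as the paper: piecewise Green's identity on $\{|y|<r(s)\}$ and $\{|y|>r(s)\}$, cancellation of the $\partial_\nu G$ interface terms via $\bar u(r(s),s)=u^*(r(s))$, the favorable sign of the gradient-jump term from the transversality in Lemma~\ref{S4L1}, the classical supersolution property \eqref{S4E5++} of $\bar u$ in the inner region where it is large, and the limiting Duhamel inequality via $\calL^\gamma_{ul}$ convergence. The only cosmetic differences are your (harmless but unnecessary) excision of a ball of radius $\epsilon$ at the origin, where $\bar u$ is in fact regular, and your use of dominated convergence in place of the paper's Proposition~\ref{S4P3} for the $\delta\to0^+$ limit.
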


\begin{proof}
Let $r(t)$ and $t_0$ be given in Lemma~\ref{S4L1}.
Let $0<s<t_0$.
$B_s$ denotes $B(0,r(s))$ for simplicity, and $B_s^c:=\RN\setminus B_s$.
By Green's formula we have
\begin{multline}\label{S4L2E1}
\int_{B_s}G_{t-s}(x-y)\Delta\bu(y)\di y\\
=\int_{\partial B_s}\left(G_{t-s}(x-y)\frac{\partial\bu}{\partial{\nu_y}}-\bu\frac{\partial}{\partial\nu_y}G_{t-s}(x-y)\right)\di S_y
+\int_{B_s}\Delta_yG_{t-s}(x-y)\bu(y)\di y,
\end{multline}
\begin{multline}\label{S4L2E2}
\int_{B_s^c}G_{t-s}(x-y)\Delta u^*(y)\di y\\
=\int_{\partial B_s^c}\left(G_{t-s}(x-y)\frac{\partial u^*}{\partial{\nu_y}}-u^*\frac{\partial}{\partial\nu_y}G_{t-s}(x-y)\right)\di S_y
+\int_{B_s^c}\Delta_yG_{t-s}(x-y)u^*(y)\di y,
\end{multline}
where $\partial/\partial{\nu_y}$ denotes the outer unit normal derivative.
Adding \eqref{S4L2E1} and \eqref{S4L2E2}, we have
\begin{align}
{\int_{B_s}G_{t-s}(x-y)\Delta v(y)\di y}
& { +\int_{B^c_s}G_{t-s}(x-y)\Delta v(y)\di y}\nonumber\\
&=\int_{\partial B_s}G_{t-s}(x-y)\frac{\partial\bu}{\partial{\nu_y}}\di S_y
+\int_{\partial B_s^c}G_{t-s}(x-y)\frac{\partial u^*}{\partial{\nu_y}}\di S_y\nonumber\\
&\quad -\int_{\partial B_s}\bu\frac{\partial}{\partial{\nu_y}}G_{t-s}(x-y)\di S_y-\int_{\partial B_s^c}u^*\frac{\partial}{\partial{\nu_y}}G_{t-s}(x-y)\di S_y\nonumber\\
&\qquad +\int_{B_s}\Delta_yG_{t-s}(x-y)\bu(y)\di y+\int_{B_s^c}\Delta_yG_{t-s}(x-y)u^*(y)\di y\nonumber\\
&\ge\int_{\RN}\Delta_yG_{t-s}(x-y)v(y)\di y,\label{S4L2E3}
\end{align}
where we used
\begin{multline*}
\int_{\partial B_s}G_{t-s}(x-y)\frac{\partial\bu}{\partial{\nu_y}}\di S_y
+\int_{\partial B_s^c}G_{t-s}(x-y)\frac{\partial u^*}{\partial{\nu_y}}\di S_y\\
=\int_{\partial B_s}G_{t-s}(x-y)\left(\frac{\partial\bu}{\partial{\nu_y}}-\frac{\partial u^*}{\partial{\nu_y}}\right)\di S_y\ge 0\ \textrm{by \eqref{S4L1E1}},
\end{multline*}
\begin{multline*}
\int_{\partial B_s}\bu\frac{\partial}{\partial{\nu_y}}G_{t-s}(x-y)\di S_y
+\int_{\partial B_s^c}u^*\frac{\partial}{\partial{\nu_y}}G_{t-s}(x-y)\di S_y\\
=\int_{\partial B_s}(\bu-u^*)\frac{\partial}{\partial{\nu_y}}G_{t-s}(x-y)\di S_y=0\ \textrm{by \eqref{S4L1E1}}.
\end{multline*}
Let $\delta>0$ be small.
Integrating \eqref{S4L2E3} over $(\delta,t-\delta)$, we have
\begin{multline}\label{S4L2E4}
{\int_{\delta}^{t-\delta}\int_{B_s}G_{t-s}(x-y)\Delta v \, \di y\di s
+\int_{\delta}^{t-\delta}\int_{B_s^c}G_{t-s}(x-y)\Delta v \, \di y\di s}\\
\ge -\int_{\delta}^{t-\delta}\int_{\RN}\partial_sG_{t-s}(x-y)v \, \di y\di s,
\end{multline}
where we used $\Delta_yG_{t-s}(x-y)=-\partial_s G_{t-s}(x-y)$.

Let
$$
\B_{\delta}:=\left\{(y,s)\in\bigcup_{\delta<\tau<t-\delta}B_{\tau}\times\{\tau\}\right\}
\quad\text{and}\quad
\B_{\delta}^c:=\left\{(y,s)\in\bigcup_{\delta<\tau<t-\delta}B_{\tau}^c\times\{\tau\}\right\}
$$
Then, $\bu$ and $u^*$ satisfy the following in the classical sense:
$$
\partial_s \bu-\Delta_y\bu=f(\bu)\quad \text{in}\ \B_{\delta}\quad \text{and}\quad
\partial_su^*-\Delta_y u^*=f(u^*)\quad \text{in}\ \B_{\delta}^c.
$$
Integrating them, we have
\begin{multline}\label{S4L2E8}
\int_{\delta}^{t-\delta}\int_{B_s}G_{t-s}(x-y)\bu_s\di y\di s\\
=\int_{\delta}^{t-\delta}\int_{B_s}G_{t-s}(x-y)\Delta\bu \di y\di s
+\int_{\delta}^{t-\delta}\int_{B_s}G_{t-s}(x-y)f(\bu)\di y\di s,
\end{multline}
\begin{multline}\label{S4L2E9}
\int_{\delta}^{t-\delta}\int_{B_s^c}G_{t-s}(x-y)u^*_s\di y\di s\\
=\int_{\delta}^{t-\delta}\int_{B_s^c}G_{t-s}(x-y)\Delta u^* \di y\di s
+\int_{\delta}^{t-\delta}\int_{B_s^c}G_{t-s}(x-y)f(u^*)\di y\di s.
\end{multline}
We consider the LHSs of \eqref{S4L2E8} and \eqref{S4L2E9}.
By Fubini's theorem and integration by parts we have
\begin{align}
\int_{\delta}^{t-\delta}\int_{B_s}&G_{t-s}(x-y)\bu_s\di y\di s=\int_{\B_{\delta}}G_{t-s}(x-y)\bu_s\di (y,s)\nonumber\\
&=\int_{\partial\B_{\delta}}G_{t-s}(x-y)\bu\nu_1^{(s)}\di S_{{(y,s)}}-\int_{\B_{\delta}}\partial_sG_{t-s}(x-y)\bu \di (y,s)\nonumber\\
&=\int_{\partial\B_{\delta}}G_{t-s}(x-y)\bu\nu_1^{(s)}\di S_{{(y,s)}}-\int_{\delta}^{t-\delta}\int_{B_s}\partial_sG_{t-s}(x-y)\bu \di y\di s,\label{S4L2E5}
\end{align}
\begin{align}
\int_{\delta}^{t-\delta}\int_{B_s^c}&G_{t-s}(x-y)u^*_s\di y\di s=\int_{\B_{\delta}^c}G_{t-s}(x-y)u_s^*\di (y,s)\nonumber\\
&=\int_{\partial\B_{\delta}^c}G_{t-s}(x-y)u^*\nu_2^{(s)}\di S_{{(y,s)}}-\int_{\B_{\delta}^c}\partial_sG_{t-s}(x-y)u^* \di (y,s)\nonumber\\
&=\int_{\partial\B_{\delta}^c}G_{t-s}(x-y)u^*\nu_2^{(s)}\di S_{{(y,s)}}-\int_{\delta}^{t-\delta}\int_{B_s^c}\partial_sG_{t-s}(x-y)u^* \di y\di s,\label{S4L2E6}
\end{align}
where $\nu_1^{(s)}$ (resp.~$\nu_2^{(s)}$) denotes the $s$ element of the outer unit normal vector on $\partial\B_{\delta}$ (resp.~$\partial\B^c_{\delta}$) and in the integration by parts in \eqref{S4L2E6} we used the fact that $G_{t-s}(x-y)$ is exponentially small for large $|y|$.
Note that $\bigcup_{\delta<\tau<t-\delta}\partial B_{\tau}\times\{\tau\}(\subset\partial\B_{\delta}\cap\partial\B_{\delta}^c)$ is a $C^1$ surface (Lemma~\ref{S4L1}) and hence the boundary integrals in \eqref{S4L2E5} and \eqref{S4L2E6} are well defined.
Adding \eqref{S4L2E5} and \eqref{S4L2E6}, we have
\begin{align}
{\int_{\delta}^{t-\delta}\int_{B_s}G_{t-s}(x-y)v_s }& {\di y\di s
+\int_{\delta}^{t-\delta}\int_{B_s^c}G_{t-s}(x-y)v_s\di y\di s}
\nonumber\\
&=\left.\int_{\RN}G_{t-s}(x-y)v\di y\right|_{s=t-\delta}-\left.\int_{\RN}G_{t-s}(x-y)v\di y\right|_{s=\delta}\nonumber\\
&\qquad-\int_{\delta}^{t-\delta}\int_{\RN}\partial_s G_{t-s}(x-y)v\di y\di s\nonumber\\
&=S(\delta)v(t-\delta)-S(t-\delta)v(\delta)-\int_{\delta}^{t-\delta}\int_{\RN}\partial_s G_{t-s}(x-y)v\di y\di s.\label{S4L2E7}
\end{align}
Here we used $\nu_1^{(s)}+\nu_2^{(s)}=0$ and
$$
\int_{\partial\B_{\delta}\cap\Gamma}G_{t-s}(x-y)\bu\nu_1^{(s)}\di S_{{(y,s)}}+\int_{\partial\B_s^c\cap\Gamma}G_{t-s}(x-y)u^*\nu_2^{(s)} \di S_{{(y,s)}}=0,
$$
where $\Gamma:= \left\{(y,s)\in \bigcup_{\delta<\tau<t-\delta}\partial B_{\tau}\times\{\tau\}\right\}$.
Adding \eqref{S4L2E8} and \eqref{S4L2E9}, we have
\begin{multline}\label{S4L2E10}
{\int_{\delta}^{t-\delta}\int_{B_s}G_{t-s}(x-y)v_s\di y\di s
+\int_{\delta}^{t-\delta}\int_{B^c_s}G_{t-s}(x-y)v_s\di y\di s}
\\
={\int_{\delta}^{t-\delta}\int_{B_s}G_{t-s}(x-y)\Delta v\di y\di s+
\int_{\delta}^{t-\delta}\int_{B^c_s}G_{t-s}(x-y)\Delta v\di y\di s}\\
+\int_{\delta}^{t-\delta}{\int_{\RN}}G_{t-s}(x-y)f(v)\di y\di s.
\end{multline}
Using \eqref{S4L2E4} and \eqref{S4L2E7}, by \eqref{S4L2E10} we have
\begin{multline}\label{S4L2E11}
S(\delta)v(t-\delta)-S(t-\delta)v(\delta)-\int_{\delta}^{t-\delta}\int_{\RN}\partial_sG_{t-s}(x-y)v\di y\di s\\
\ge-\int_{\delta}^{t-\delta}\int_{\RN}\partial_sG_{t-s}(x-y)v\di y\di s+\int_{\delta}^{t-\delta}\int_{\RN}G_{t-s}(x-y)f(v)\di y\di s.
\end{multline}
By the monotone convergence theorem we have
$$
\lim_{\delta\to {0^+}}\int_{\delta}^{t-\delta}\int_{\RN}G_{t-s}(x-y)f(v)\di  y\di s
=\int_0^t\int_{\RN}G_{t-s}(x-y)f(v)\di y\di s
$$
for a.a.~$(x,y)\in\RN\times (0,t_0)$.
Using Proposition~\ref{S4P3}, for $1\le \gamma<\gamma^*$, we have
\begin{align*}
&\left\|S(\delta)v(t-{\delta})-v(t)\right\|_{L^{{\gamma}}_{ul}(\RN)}\\
&\qquad\le\left\|S(\delta)v(t-\delta)-S(\delta)v(t)\right\|_{L^{{\gamma}}_{ul}(\RN)}+\left\|S(\delta)v(t)-v(t)\right\|_{L^{{\gamma}}_{ul}(\RN)}\\
&\qquad\le C_0\left\| v(t-\delta)-v(t)\right\|_{L^{{\gamma}}_{ul}(\RN)}+\left\|S(\delta)v(t)-v(t)\right\|_{L^{{\gamma}}_{ul}(\RN)}\\
&\qquad\to 0\ \text{as}\ \delta\to 0^+,
\end{align*}
where $\gamma^*$ is as in Proposition~\ref{S1P1} and we used $\lim_{\delta\to {0^+}}\int_{B(0,2)}|v(t-\delta)-v(t)|^\gamma \, \di x=0$ and Proposition~\ref{S4P2} with $v(t)\in\calL^{{\gamma}}_{ul}(\RN)$.
{Similarly,} by Proposition~\ref{S4P3} we have
\begin{align*}
&\left\|S(t-\delta)v(\delta)-S(t)u^*\right\|_{L^{{\gamma}}_{ul}(\RN)}\\
&\qquad=\left\|S(t-\delta)v(\delta)-S(t-\delta)u^*\right\|_{L^{{\gamma}}_{ul}(\RN)}+\left\|S(t-\delta)u^*-S(t)u^*\right\|_{L^{{\gamma}}_{ul}(\RN)}\\
&\qquad\le 2C_1\left\|v(\delta)-u^*\right\|_{L^{{\gamma}}_{ul}(\RN)}+\left\|S(t-\delta)u^*-S(t)u^*\right\|_{L^{{\gamma}}_{ul}(\RN)}\\
&\qquad \to 0\ \text{as}\ \delta\to 0^+,
\end{align*}
where we used $\lim_{\delta\to 0}\int_{B(0,2)}|v(\delta)-u^*|^\gamma \, \di
x=0$.
We let $\delta\to 0^+$ in \eqref{S4L2E11}.
Then, we obtain
$$
v(t)-S(t)u^*\ge \int_0^t S(t-s)f(v(s))\di s
$$
for a.a.~$(x,t)\in\RN\times (0,t_0)$.
Thus, $v(t)$ is a supersolution of problem~\eqref{S1E1} in $\mathbb{R}^N\times[0,t_0)$ in the sense of Definition~\ref{S1D1}.
\end{proof}

\section{Nonuniqueness (supercritical)}
\begin{lemma}\label{S5L1}
Let $\bw(t)$ be a supersolution of problem~\eqref{S1E1} in $\mathbb{R}^N\times[0,T)$ in the sense of Definition~{\rm \ref{S1D1}}, where  $T>0$.
If $0\le w_0\le \bw(0)$ a.e.~in $\RN$, then problem~\eqref{S1E1} with initial data $w_0$ has a positive solution $w(t)$ such that $w(t)\le \bw(t)$ a.e.~in $\RN\times[0,T)$.
\end{lemma}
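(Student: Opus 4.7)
The plan is to construct $w$ by monotone Picard iteration inside the order interval $[0,\bar{w}]$. Setting
\begin{equation*}
w_1(x,t) := [S(t) w_0](x), \qquad w_{n+1}(x,t) := [S(t) w_0](x) + \int_0^t [S(t-s) f(w_n(s))](x) \, \di s
\end{equation*}
for $n\ge 1$, I claim the sequence $\{w_n\}$ is pointwise nondecreasing in $n$ and bounded above by $\bar{w}$; the limit $w := \lim_{n\to\infty} w_n$ will then be the desired solution.

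First I would establish $0 \le w_n(t) \le w_{n+1}(t) \le \bar{w}(t)$ a.e.\ in $\RN\times[0,T)$ by induction. The base case is immediate: positivity of the heat kernel together with $w_0 \le \bar{w}(0)$ gives $0 \le w_1 = S(t) w_0 \le S(t) \bar{w}(0)$, while the supersolution inequality of Definition~\ref{S1D1}(ii), combined with $f\ge 0$, yields $S(t)\bar{w}(0) \le \bar{w}(t)$, hence $w_1\le \bar{w}$. Moreover $w_2 = w_1 + \int_0^t S(t-s)f(w_1)\,\di s \ge w_1$. For the inductive step, assume $w_{n-1}\le w_n\le \bar{w}$. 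Monotonicity $f'>0$ from (A2) gives $0\le f(w_{n-1})\le f(w_n)\le f(\bar{w})$, and applying the positive operator $\int_0^t S(t-s)(\cdot)\,\di s$ preserves these inequalities, yielding $w_n\le w_{n+1}$ and
\begin{equation*}
w_{n+1}(t) \le S(t)\bar{w}(0) + \int_0^t S(t-s)f(\bar{w}(s))\,\di s \le \bar{w}(t),
\end{equation*}
where the final step is again the supersolution inequality for $\bar{w}$.

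Next I would pass to the limit. Define $w(x,t) := \lim_{n\to\infty} w_n(x,t) \in [0,\bar{w}(x,t)]$, which is finite for a.e.\ $(x,t)$ since $\bar{w}$ is finite-valued. Continuity and monotonicity of $f$ from (A1)--(A2) give $f(w_n)\uparrow f(w)$ pointwise. Applying the monotone convergence theorem to the space-time integral defining $w_{n+1}$ lets one pass to the limit and obtain
\begin{equation*}
w(x,t) = [S(t)w_0](x) + \int_0^t [S(t-s)f(w(s))](x)\,\di s
\end{equation*}
for a.e.\ $(x,t)$, which is exactly Definition~\ref{S1D1}(i); hence $w$ is a solution with initial datum $w_0$, and $w\le \bar{w}$ holds by construction.

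The step I expect to require the most care is the very first induction step, where one must invoke the supersolution inequality for $\bar{w}$ in the right order to extract both the pointwise bound $S(t)w_0 \le \bar{w}(t)$ and the finiteness of $\int_0^t S(t-s)f(\bar{w})\,\di s$ that is needed to run monotone convergence at the subsequent level. Once these a priori facts are in place, everything else is a routine consequence of the sign and monotonicity of $f$, the positivity of $G_t$, and the fact that $\bar{w}$ already satisfies a finite integral inequality; no contraction mapping or regularity argument is necessary.
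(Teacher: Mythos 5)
Your proposal is correct and is essentially the paper's own argument: a monotone Picard iteration bounded above by the supersolution $\bw$, with the limit identified via the monotone convergence theorem (the paper merely starts the iteration at $w_1=0$ instead of $w_1=S(t)w_0$, which changes nothing). No further comment is needed.
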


\begin{proof}
We define
\begin{equation}\label{S5L1E1}
w_1=0,\quad w_k(t)=S(t)w_0+\int_0^tS(t-s)f(w_{k-1}(s))\di s
\end{equation}
for $k=2,3,\ldots$.
By induction we can easily see that
$$
0=w_1\le w_2\le \cdots\le w_n\le\cdots \le \bw(t)<\infty
$$
for a.a.~$(x,t)\in \RN\times [0,T)$.
For each $t>0$, the limits $\lim_{k\to\infty}w_k(t)$ exists for a.a.~$(x,t)\in\RN\times[0,T)$.
Thus,
$$
w(t):=\lim_{k\to\infty}w_k(t)
$$
for a.a.~$(x,t)\in\RN\times [0,T)$.
We apply the monotone convergence theorem to \eqref{S5L1E1}.
Since $w(t)\le \bw(t)<\infty$ for a.a.~$(x,t)\in\RN\times[0,T)$, we obtain
$$
\infty>w(t)=S(t)w_0+\int_0^tS(t-s)f(w(s))\di s
$$
for a.a.~$(x,t)\in\RN\times[0,T)$, and hence $w$ is the solution of problem~\eqref{S1E1} in $\mathbb{R}^N\times[0,T)$ in the sense of Definition~\ref{S1D1}.
\end{proof}

\begin{proof}[Proof of Theorem~{\rm \ref{S1T1}}]
We show that problem~\eqref{S1E1} with $u_0=u^*$ has two positive solutions.
We have already shown in Lemma~\ref{S10L1} that $u^*$ is a solution of problem~\eqref{S1E1} with $u_0=u^*$ in the sense of Definition~\ref{S1D1}.
Hereafter, we show that the problem has another solution $u(t)$ such that $u(t)\in L^{\infty}_{loc}((0,t_0),L^{\infty}(\RN))$.

Let $u_{0n}(x):=\min\{ u^*(x),n\}$.
Then, $u_{0n}(x)\le v(x,0)$ a.e.~in $\RN$.
Let $\zeta_n(t)$ be a solution of $\zeta_n^{\prime}(t) = f(\zeta_n(t))$ for $t \in (0,t_n)$ and $\zeta_n(0) =n$, where $t_n$ is the maximal existence time of $\zeta_n$.
Then $u_{0n}(x)\le \zeta_n(0)$ in $\RN$.
Taking sufficiently large $n$ if necessary, we can assume that $t_n < t_0$, where $t_0$ is as in Lemma~\ref{S4L2}.
Since $v(t)$ and $\zeta_n(t)$ are  supersolutions in the sense of Definition~\ref{S1D1} (see Lemma~\ref{S4L2}), 
we have
\begin{equation*}
\begin{split}
\infty > v(t) 
&\ge S(t)v(0) + \int_0^t S(t-s) f(v(s)) \, \di s\\
&\ge S(t)u_{0n} + \int_0^t S(t-s) f(\min\{v(s), \zeta_n(s)\}) \, \di s
\end{split}
\end{equation*}
for a.a.~$(x,t) \in \mathbb{R}^N\times[0,t_0)$
and
\begin{equation*}
\begin{split}
\infty > \zeta_n(t) 
&= n + \int_0^t f(\zeta_n(s)) \, \di s\\
&= S(t)\zeta_n(0) + \int_0^t S(t-s) f(\zeta_n(s)) \, \di s\\
&\ge S(t)u_{0n} + \int_0^t S(t-s) f(\min\{v(s), \zeta_n(s)\}) \, \di s
\end{split}
\end{equation*}
for a.a.~$(x,t) \in \mathbb{R}^N\times[0,t_n)$.
Noting that $\min\{v(x,t), \zeta_n(t)\} = v(x,t)$ for a.a.~$\mathbb{R}^N\times[t_n,t_0)$,
we see that
these imply that
$\min\{v(x,t), \zeta_n(t)\}$ is a supersolution of problem~\eqref{S1E1} in $\mathbb{R}^N\times [0,t_0)$
in the sense of Definition~\ref{S1D1}.
Thus, by Lemma~\ref{S5L1} we see that problem~\eqref{S1E1} with initial data $u_{0n}$ has a solution $u_n(t)$ in $\mathbb{R}^N\times[0,t_0)$ such that
$$
u_n(t)\le v(t)\quad\text{a.e.~in}\ \RN\times[0,t_0)
$$
and
$$
u_n(t)\le \zeta_n(t)\quad\text{a.e.~in}\ \RN\times[0,t_n).
$$
Since $v\in L^\infty(\mathbb{R}^N\times (\delta,t_0))$ for any $\delta\in (0,t_0)$ and $\zeta_n(t) < \infty$ for all $t\in[0,t_n/2)$, we see that $u_n\in L^\infty(\mathbb{R}^N\times [0,t_0))$,
hence $u_n(t)$ is a classical solution of problem~\eqref{S1E1} in $\mathbb{R}^N\times[0,t_0)$.
We shall show that $u_n \in C(\mathbb{R}^N\times [0,t_0))$.
The solution $u_n$ satisfies
\begin{equation*}
u_n(t)=S(t)u_{0n}+\int_0^tS(t-s)f(u_n(s))\di s
\end{equation*}
for all $(x,t) \in \RN\times[0,t_0)$.
Noting that $u_n(t)\in C^{2,1}(\RN\times (0,t_0))\cap L^\infty(\mathbb{R}^N\times (0,t_0)) $, we have
\begin{equation*}
\|u_n(t)- u_{0n}\|_{C(\mathbb{R}^N)}
\le \|S(t)u_{0n} - u_{0n}\|_{C(\mathbb{R}^N)}+\int_0^t\|S(t-s)f(u_n)\|_{C(\mathbb{R}^N)} \, \di s
\end{equation*}
for all $t \in (0,t_0)$.
Since $u_{0n} \in L^\infty(\mathbb{R}^N) \cap BUC(\mathbb{R}^N)$,
the first term on the RHS tends to $0$ as $t\to 0^+$ (see {\it e.g.} \cite[Section~2.3]{E10}).
It suffices to consider the second term on the RHS, but by the smoothing effect of the heat kernel, it is obvious that
\[
\int_0^t\|S(t-s)f(u_n)\|_{C(\mathbb{R}^N)} \, \di s \le C t f(\|u_n\|_{L^\infty(\mathbb{R}^N \times (0,t_0))})
\to 0 \quad \mbox{as} \quad t\to 0^+.
\]
Thus, we see that $u_n \in C(\mathbb{R}^N\times[0,t_0))$.
Next, we shall show that $|\nabla u_n|$ is bounded on $\mathbb{R}^N \times (0,t_0)$.
Since $u$ is smooth, we have 
\begin{equation*}
\nabla u_n(t)= \nabla S(t)u_{0n}+\int_0^t \nabla S(t-s)f(u_n(s)) \, \di s
\end{equation*}
for all $(x,t) \in \RN\times(0,t_0)$.
Now by integration by parts,  we have
\begin{equation*}
\begin{split}
&\nabla S(t)u_{0n}(x) \\
&  = \nabla_x \left( \int_{\{u^*(y) \ge n\}} G_t(x-y) n \, \di y + \int_{\{u^*(y) < n\}} G_t(x-y) u^*(y) \, \di y\right)\\
&= - \int_{\partial \{u^*(y) \ge n\}} G_t(x-y)n \nu_1(y) \, \di {S_y}\\
&\qquad+ \int_{\{u^*(y) < n\}}  G_t(x-y) \nabla u^*(y) \, \di y
-  \int_{\partial \{u^*(y) < n\}} G_t(x-y)u^*(y) \nu_2(y) \, \di {S_y}\\
&= \int_{\{u^*(y) < n\}}  G_t(x-y) \nabla u^*(y) \, \di y
\end{split}
\end{equation*}
for all $(x,t) \in \mathbb{R}^N\times (0,t_0)$, where 
$\nu_1$ (resp.~$\nu_2$) is the outer normal unit vector on $\partial\{u^*(y) \ge n\}$ (resp. $\partial\{u^*(y) < n\}$). Note that 
\[
\partial\{u^*(y) \ge n\} = \partial \{u^*(y) < n\} = \{u^*(y) = n\}
\]
and
$\nu_1 = - \nu_2$ since $u^*$ is radial and decreasing.
Therefore, we have
\[
\|\nabla S(t) u_{0n}\|_{L^\infty(\mathbb{R}^N)} \le C\|\nabla u^*\|_{L^\infty (\{u^*(x)<n\})}<\infty
\]
for all $t \in (0,t_0)$.
This together with the smoothing effect of the heat kernel
implies  that
\begin{equation*}
\begin{split}
\|\nabla u_n(t)\|_{L^\infty(\mathbb{R}^N)}
&\le  C\|\nabla u^*\|_{L^\infty (\{u^*(x)<n\})} + C f(\|u_n\|_{L^\infty(\mathbb{R}^N \times (0,t_0))}) \int_0^t (t-s)^{-\frac{1}
{2}} \, \di s\\
&\le C\|\nabla u^*\|_{L^\infty (\{u^*(x)<n\})}+ C t^\frac{1}{2}f(\|u_n\|_{L^\infty(\mathbb{R}^N \times (0,t_0))})
\end{split}
\end{equation*}
for all $t\in (0,t_0)$, which implies that $\nabla u_n \in L^\infty (\mathbb{R}^N \times(0,t_0))$.

In summary, $u_n(t)\in C^{2,1}(\RN\times (0,t_0))\cap C(\RN\times[0,t_0))$, and $|u_n|$ and $|\nabla u_n|$ are bounded on $\RN\times(0,t_0)$.
Then, all assumptions of the comparison principle for classical solutions on $\RN$ (\cite[Proposition 52.6 in p.617]{QS19}) are satisfied.
We see that for $n\ge 2$,
$$
u_{n-1}(x,t)\le u_n(x,t)\quad\text{a.e.~in}~\RN\times [0,t_0).
$$
Therefore, there exists a limit
$$
u(x,t):=\lim_{n\to\infty} u_n(x,t)\quad\text{a.e.~in}\ \RN\times[0,t_0).
$$
The solution $u_n$ satisfies
\begin{equation}\label{S1T1PE2}
u_n(t)=S(t)u_{0n}(t)+\int_0^tS(t-s)f(u_n(x))\di s
\end{equation}
a.e.~in $\RN\times[0,t_0)$.
Letting $n\to\infty$ in \eqref{S1T1PE2}, by the monotone convergence theorem we have
\begin{equation}\label{S1T1PE3}  
u(t)=S(t)u^*+\int_0^tS(t-s)f(u(s))\di s
\end{equation}
a.e.~in $\RN\times[0,t_0)$.
Thus, $u(t)$ is a solution in the sense of Definition~\ref{S1D1}.
Since $u_n(t)\le v(t)$, we have
$$
u(x,t)\le v(x,t)\le\bu(0,t)=F^{-1}[c_0t]
$$
a.e.~in $\RN\times[0,t_0)$, where $c_0:=F_q[\varphi(0,\alpha_0)]$.
Then, $u(x,t)\le \min\{u^*(x),F^{-1}[c_0t]\}$ a.e.~in $\RN\times[0,t_0)$, and hence $u(t)\in L^{\infty}_{loc}((0,t_0),L^{\infty}(\RN))$.
By the parabolic regularity theorem, we see that $u$ is a classical solution of problem~\eqref{S1E1} with $u_0=u^*$ in $\mathbb{R}^N\times(0,t_0)$.

Finally, we shall prove \eqref{S1T1S1}.
Since $u$ is a classical solution, $u$ satisfies \eqref{S1T1PE3} for all $(x,t) \in \mathbb{R}^N\times(0,t_0)$.
For $1\le \gamma <\gamma^*$, we see from $u(x,t) \le u^*(x)$ for a.e.~$(x,t) \in \mathbb{R}^N\times(0,t_0)$ that
\begin{equation*}
\|u(t) - u^*\|_{L^\gamma_{ul}(\mathbb{R}^N)} \le 
\|S(t)u^* - u^*\|_{L^\gamma_{ul}(\mathbb{R}^N)}
+ \int_0^t \|S(t-s)f(u^*)\|_{L^\gamma_{ul}(\mathbb{R}^N)} \, \di s
\end{equation*}
for all $t\in(0,t_0)$.
It follows from Propositions~\ref{S1P1} and \ref{S4P2} that
the first term on the RHS tends to $0$ as $t\to 0^+$.
It suffices to consider the second term on the RHS.
Note that $q_f<q_0$ and $1\le \gamma <\gamma^*$ imply that
\[
\max\left\{1, \frac{N\gamma}{N+2\gamma}\right\} < \frac{N}{2q_f}.
\]
First, let us consider the case of $1<\gamma<\gamma^*$.
We can take a sufficiently small $\delta>0$ and 
$1< \beta < \gamma$ such that
\[
\max\left\{1, \frac{N\gamma}{N+2\gamma}\right\}< \beta  < \frac{N}{2q_f+2\delta}< \frac{N}{2q_f}.
\]
By \eqref{S3L3E2}, we have
\[
\beta < \frac{N}{2q_f+2\delta} \implies f(u^*) \in L^\beta_{ul}(\mathbb{R}^N).
\]
Furthermore, 
\[
\frac{N\gamma}{N+2\gamma} < \beta \implies 1-\frac{N}{2}\left(\frac{1}{\beta}-\frac{1}{\gamma}\right)>0.
\]
Thus, we see that
\begin{equation*}
\begin{split}
\int_0^t \|S(t-s)f(u^*)\|_{L^\gamma_{ul}(\mathbb{R}^N)} \, \di s
&\le C\|f(u^*)\|_{L^\beta_{ul}(\mathbb{R}^N)} \int_0^t \left((t-s)^{-\frac{N}{2}\left(\frac{1}{\beta}-\frac{1}{\gamma}\right)} +1 \right) \, \di s\\
& \to 0 \quad \mbox{as} \quad t \to 0^+,
\end{split}
\end{equation*}
and obtain \eqref{S1T1S1}.
Next, let us consider the case of $\gamma=1 (=\beta)$.
This  case is much simpler.
We take a sufficiently small $\delta>0$ such that
$1 < N/(2q_f+2\delta)$.
Thus, we see that
\begin{equation*}
\begin{split}
\int_0^t \|S(t-s)f(u^*)\|_{L^1_{ul}(\mathbb{R}^N)} \, \di s
&\le Ct\|f(u^*)\|_{L^1_{ul}(\mathbb{R}^N)} \to 0 \quad \mbox{as} \quad t \to 0^+,
\end{split}
\end{equation*}
and obtain \eqref{S1T1S1}.
The proof is complete.
\end{proof}

\section{Nonuniqueness (critical/subcritical)}
In order to study the profile of self-similar solutions we consider the problem
\begin{equation}\label{S6E1}
\begin{cases}
\displaystyle{\varphi''+\frac{N-1}{\eta}\varphi'+\frac{1}{2}\eta\varphi'+\frac{\varphi}{p-1}+\varphi^p=0}, & \text{for}\ \eta>0,\\
\varphi(0)=\alpha>0,\ \varphi'(0)=0.
\end{cases}
\end{equation}
It was proved in \cite{HW82} that $\varphi(\eta,\alpha)>0$ for $\eta\ge 0$ if $p>p_F$.

\begin{proposition}[{\cite[Corollary~1.2]{N06}}]
Let $p_F<p<p_{JL}$.
Then the following holds:
\begin{equation}\label{S6P1E1}
\sup\{\ell>0;\ S_{\ell}\neq\emptyset\}>L,
\end{equation}
where $L$ is defined by \eqref{L} and
$$
S_{\ell} = \left\{\varphi;\ \textrm{$\varphi(\eta,\alpha)$ is a positive solution of \eqref{S6E1} and }
\lim_{\eta\to\infty}\eta^{\frac{2}{p-1}}\varphi(\eta,\alpha)=\ell
\right\}.
$$
\end{proposition}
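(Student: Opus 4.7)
The plan is to treat \eqref{S6E1} as a shooting problem in the initial height $\alpha$, and to exhibit $\alpha > 0$ such that the asymptotic constant $\ell(\alpha) := \lim_{\eta\to\infty}\eta^{2/(p-1)}\varphi(\eta,\alpha)$ strictly exceeds $L$.

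First, I would verify that for every $\alpha > 0$ the solution $\varphi(\cdot,\alpha)$ of \eqref{S6E1} is positive and global (Haraux--Weissler \cite{HW82}, using $p > p_F$), and that $\ell(\alpha)$ is well defined and continuous in $\alpha$. Existence of the limit is forced by the leading balance at infinity: under the ansatz $\varphi \sim c\eta^{-2/(p-1)}$, the self-similar terms $\tfrac{1}{2}\eta\varphi'$ and $\varphi/(p-1)$ are both $O(\eta^{-2/(p-1)})$ and cancel to leading order, while $\varphi''$, $\tfrac{N-1}{\eta}\varphi'$ and $\varphi^p$ are of strictly lower order. Integrating the effective first-order equation for the leading coefficient then yields $\ell(\alpha)$, and continuous dependence follows from standard ODE continuous dependence together with uniform tail control.

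Second, I would analyze the inner region of \eqref{S6E1} (moderate $\eta$), where the self-similar transport terms are subordinate and the equation reduces essentially to the stationary equation $-\Delta\varphi = \varphi^p$, whose singular profile is $\varphi^*(\eta) = L\eta^{-2/(p-1)}$. Passing to Emden--Fowler coordinates $w(s) := \eta^{2/(p-1)}\varphi(\eta)$, $s := \log\eta$, and linearizing the stationary part of the resulting equation about $w \equiv L$ produces a constant-coefficient second-order ODE whose characteristic roots become complex precisely when $p < p_{JL}$ -- this is the defining property of the Joseph--Lundgren exponent. Consequently, for $p_F < p < p_{JL}$, as $\alpha \to \infty$ the regular solutions $\varphi(\cdot,\alpha)$ oscillate around $\varphi^*$ in the inner region, accumulating infinitely many intersections with $\varphi^*$, generalizing the first intersection already given by Proposition~\ref{S4P1}.

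Third, I would match the inner oscillations to the outer self-similar tail. Because the outer region merely pins down the asymptotic constant $\ell(\alpha)$ from the state reached at the edge of the inner region, the oscillation of $w = \eta^{2/(p-1)}\varphi$ around $L$ in the inner region propagates into the asymptotic value: as $\alpha$ sweeps through large values, $\ell(\alpha) - L$ changes sign along a sequence of parameters. Combined with continuity of $\ell(\cdot)$, the intermediate value theorem yields $\alpha^* > 0$ with $\ell(\alpha^*) > L$, so $\varphi(\cdot,\alpha^*) \in S_{\ell(\alpha^*)}$, which proves \eqref{S6P1E1}.

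The main obstacle will be the matched-asymptotic/transition analysis in step three: one must show that the spiral oscillations of $w$ near $L$ in the inner region are not merely transient but survive the transition to the outer region and produce genuine variations of $\ell(\alpha)$ about $L$. This typically requires a careful center-stable manifold analysis of the full Emden--Fowler reformulation of \eqref{S6E1}, together with a quantitative control over how the transition layer projects the inner oscillatory state onto the one-dimensional family of outer profiles parametrized by $\ell$.
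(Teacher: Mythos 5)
First, a point of comparison: the paper does not prove this proposition at all --- it is imported verbatim as \cite[Corollary~1.2]{N06}, so your proposal is being measured against a citation, not an argument. Attempting a self-contained proof is therefore a genuinely different (and much more ambitious) route, but as written it has concrete gaps and does not close.

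The most serious problem is step two. The oscillation mechanism you invoke --- complex characteristic roots of the linearization about $w\equiv L$ in Emden--Fowler variables, hence spiralling of large-$\alpha$ regular solutions around $\varphi^*$ with infinitely many intersections --- is the correct picture only for $p_S\le p<p_{JL}$, where the blow-up rescaling of $\varphi(\cdot,\alpha)$ converges to a \emph{positive entire} solution of the Lane--Emden equation $-\Delta U=U^p$. For $p_F<p<p_S$ (which is precisely the range needed in Section~6, where $p_0<p_f\le p_S$) the limiting regular Lane--Emden solution vanishes at a finite radius, so the ``infinitely many intersections as $\alpha\to\infty$'' conclusion does not follow from your linearization, and a different mechanism (the one actually developed in \cite{N06}) is required. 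Secondary gaps: (i) positivity of $\varphi(\cdot,\alpha)$ for \emph{all} $\alpha$ and the existence and continuity of $\ell(\alpha)=\lim_{\eta\to\infty}\eta^{2/(p-1)}\varphi(\eta,\alpha)$ are asserted from a formal leading-order balance, but solutions with Gaussian decay have $\ell=0$ and continuity of $\alpha\mapsto\ell(\alpha)$ across the change of decay type is exactly the kind of statement that needs proof, not ``standard continuous dependence''; (ii) the final step is logically inverted --- if you had already shown that $\ell(\alpha)-L$ changes sign, you would be done without the intermediate value theorem (IVT can only manufacture points where $\ell=L$), so the appeal to IVT hides the fact that the sign change, i.e.\ the existence of some $\alpha$ with $\ell(\alpha)>L$, is the entire content of \eqref{S6P1E1} and is nowhere established. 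You acknowledge in your last paragraph that the inner-to-outer matching ``typically requires a careful center-stable manifold analysis''; that analysis is the proof, and deferring it means the proposal remains a heuristic outline rather than a proof. Given that the paper simply cites \cite{N06}, the efficient fix is to do the same.
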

In Section~4 we have already seen that
$$
\varphi^*(\eta):=F_q^{-1}\left[\frac{\eta^2}{2N-4q_f}\right]=L\eta^{-\frac{2}{p-1}}
$$
is a singular solution of equation in~\eqref{S6E1}.
It follows from \eqref{S6P1E1} that there exists $\alpha_0>0$ such that $\varphi(\eta,\alpha_0)$ and $\varphi^*(\eta)$ has the first intersection point in $\eta>0$.
Thus, Proposition~\ref{S4P1} is valid for $(p_F<)p_0<p<p_{JL}$.

\begin{proof}[Proof of Theorem{\rm ~\ref{S1T2}}]
Because of (A8), we assume the existence of a singular stationary solution $u^*$.
Since $q_f<q_0$ and $N>2$, the proof of Lemma~\ref{S3L3} is valid, and hence \eqref{S3L3E0} holds even in the case $q_S\le q_f<q_0$.

Since $q_f<q_0$, two inequalities
$$
N-2q_f-2\delta>0\quad\text{and}\quad N+1-2q_f-2\delta>0
$$
hold for small $\delta>0$.
Therefore, \eqref{S10L1E2} holds, and the other arguments in the proof of Lemma~\ref{S10L1} are valid.
Thus, Lemma~\ref{S10L1} holds.
Specifically, $u^*$ is a positive radial singular stationary solution of equation in~\eqref{S1E1} in the sense of Definition~\ref{S1D1} even in the case $q_S\le q_f<q_0$.

It follows from Proposition~\ref{S4P1}, which is valid for $p_0<p<p_{JL}$, holds for $q_S\le q<q_0$.
Then, all the arguments in Section~4 are valid in the case $q_S\le q_f<q_0$.
In particular, $v$ defined by \eqref{S4E7} is a supersolution of problem~\eqref{S1E1} in $\RN\times[0,t_0)$ in the sense of Definition~\ref{S1D1}.

Using $v$, we can construct a bounded solution $u(t)$ of problem~\eqref{S1E1} with $u_0=u^*$.
The proof of the construction is the same as the proof of Theorem~\ref{S1T1}.
In particular, $u(t)\le u^*$ a.e.~in $\RN$ for $0<t<t_0$.
Using this inequality, in the same way as Theorem~\ref{S1T1} we can obtain \eqref{S1T1S1} provided that $1\le \gamma<\gamma^*$.

Now, we have two solutions, namely $u^*$ and $u(t)$.
The proof is complete.
\end{proof}

\bigskip
\noindent
{\bf Acknowledgement}\\
YM was supported by JSPS KAKENHI Grant Number 24K00530.\\

\noindent{\bf Conflict of interest}\\
The authors have no relevant financial or non-financial interests to disclose.\\

\noindent{\bf Data Availability}\\
Data sharing is not applicable to this article as no new data were created or analyzed in this study.



\end{document}